\setlist[enumerate]{leftmargin=.5in}
\setlist[itemize]{leftmargin=.5in}
\crefname{hypothesis}{Hypothesis}{Hypotheses}
\newcommand{\be}{\begin{equation}}
\newcommand{\ee}{\end{equation}}
\newcommand{\beqas}{\begin{eqnarray*}}
\newcommand{\eeqas}{\end{eqnarray*}}
\newcommand{\dee}{\mathrm{d}}
\newcommand{\supp}{\mathrm{supp}}
\newcommand{\intr}{\mathrm{int}}
\def \R {{\mathbb R}}
\def \Q {{\mathbb Q}}
\def \tr {{\mathrm{tr}}}
\def \eps {{\varepsilon}}
\newcommand{\G}{\mathcal{G}}
\numberwithin{theorem}{section}
\newcommand{\TheTitle}{Multiplicative noise in Bayesian inverse problems: Well-posedness and consistency of MAP estimators} 
\newcommand{\TheAuthors}{Matthew. M. Dunlop}
\title{{\TheTitle}}
\author{
  Matthew M. Dunlop\thanks{Courant Institute of Mathematical Sciences, New York University, New York, New York, 10012, USA (\email{matt.dunlop@nyu.edu}).}
}
\begin{document}
\maketitle

\begin{abstract}
Multiplicative noise arises in inverse problems when, for example, uncertainty on measurements is proportional to the size of the measurement itself. The likelihood that arises is hence more complicated than that from additive noise. We consider two multiplicative noise models: purely multiplicative noise, and a mixture of multiplicative noise and additive noise. Adopting a Bayesian approach, we provide conditions for the resulting posterior distributions on Banach space to be continuous with respect to perturbations in the data; the inclusion of additive noise in addition to multiplicative noise acts as a form of regularization, allowing for milder conditions on the forward map. Additionally, we show that MAP estimators exist for both the purely multiplicative and mixed noise models when a Gaussian prior is employed, and for the latter prove that they are consistent in the small noise limit when all noise is Gaussian.

\end{abstract}

\begin{keywords}
Bayesian inverse problems, multiplicative noise, MAP estimation, optimization.
\end{keywords}


\section{Introduction}

Let $X$, $Y$ be separable Banach spaces and define a forward operator $\mathcal{G}:X\rightarrow Y$. A common problem is to determine a true state $u \in X$ given observations $y \in Y$ of $\mathcal{G}(u)$. These observations are typically corrupted by noise, which may arise for a multitude of reasons. For example, it may arise from measurement instruments having only finite precision; it may arise from using an inadequate model, in which case it typically depends on the unknown state; or the forward model may be inherently stochastic. Three typical ways to model the noise on $\mathcal{G}(u)$ are as follows:\footnote{Here, juxtaposition of two elements/vectors denotes pointwise/componentwise multiplication respectively.}
\begin{enumerate}[(i)]
\item purely additive noise,
\[
y = \mathcal{G}(u) + \eta_a,\;\;\;\mathbb{E}(\eta_a) = 0;
\]
\item purely multiplicative noise,
\[
y = \eta_m\mathcal{G}(u),\;\;\;\mathbb{E}(\eta_m) = 1;
\]
\item or a mixture of both additive and multiplicative noise,
\begin{align*}
y = \eta_m\mathcal{G}(u) + \eta_a,\;\;\;&\mathbb{E}(\eta_a) = 0,\;\mathbb{E}(\eta_m) = 1,
\end{align*}
\end{enumerate}
where we assume that $\eta_a$ and $\eta_m$ are independent in the latter case.

The additive noise model (i) is the most commonly used noise model, and has a direct relation to least squares optimization, however it is not always the most appropriate model. Examples where the multiplicative noise model (ii) is common include image denoising, wherein noise on each pixel is proportional to the value of the pixel; in this case it is natural for the distribution of the noise to be positive, as pixel values typically have a positive range \cite{goodman1976some}. Other examples include subsurface imaging, where for example measurement errors are proportional to the size of the quantity being measured; here there may be no restriction on sign, and so Gaussian noise may be preferable. The mixed model (iii) may be used either to explicitly model different types of corruption of the data, or simply as a regularized version of model (ii) that leads to greater numerical stability. Note that multiplicative noise could alternatively be viewed as a state-dependent additive noise; such noise can appear when, for example, accounting for model error.

Deterministic methods for solving inverse problems have been extensively studied; see \cite{engl1996regularization} for a review. A common class of methods for solving such problems is variational methods, which involve finding a minimizer of a functional of the form
\[
J(u) =  H(y,u) + \lambda R(u).
\]
Here, $H$ is a misfit function quantifying how well the state agrees with the observed data, and $R$ is a regularization term that promotes certain properties, such as regularity, of the solution. The misfit function is chosen according the data model; the negative log-likelihood of the data $y$ given the state $u$ is typical. In the case of additive noise (i), $H$ is generally a least squares functional, whereas for the other two data models the functional is typically non-convex, even when the forward map $\G$ is linear. In the case of multiplicative noise (ii), when $\G$ is linear, common choices of $H$ include
\begin{align}
\label{eq:gamma}
H(y,u) = \int_D\left(\log{\G(u)} + \frac{y}{\G(u)}\right),
\end{align}
which arises by assuming the noise has a Gamma distribution \cite{Aubert2008,huang2010multiplicative}, and generalizations thereof \cite{ullah2017new,dong2013convex,lu2016multiplicative}.  Other approaches include constrained optimization of the regularization term, such as the RLO method \cite{rudinlions} which minimizes the total variation norm subject to matching the known mean and variance of the observed data. Alternatively, assuming that the noise and forward map are positive, one may transform the data with the logarithm function, turning the multiplicative noise problem into an additive noise one \cite{Achim2006,chen2012multiplicative}; the approaches from case (i) are then available. The mixed noise model appears to have been studied significantly less, however it has been approximated with an additive noise model by learning the scale of the noise from the observations \cite{isaac2015scalable}; we see later that such an approximation may lead to poor estimates in a certain asymptotic regime. In all cases, common choices of regularization for problems with multiplicative noise include total variation and its generalizations \cite{Aubert2008,ullah2017new,chen2012multiplicative,dong2013convex,lu2016multiplicative}, promoting recovery of sharp edges, and Tikhonov \cite{engl1996regularization}, promoting spatial smoothness.

In this article we take a Bayesian approach, so that we obtain a probability distribution on the unknown state, representing uncertainty in the solution, rather than a single state. This distribution is constructed by combining the data likelihood with a prior probability distribution and using Bayes' theorem; in this sense the choice of prior can be viewed as the choice of regularization of the problem. There are two main approaches to studying Bayesian inverse problems: either one discretizes the state space first and applies finite-dimensional methodology for analysis, or one works directly in function space to analyse the problem before discretizing. Both approaches have advantages and disadvantages; and overview of the former may be found in \cite{kaipio2006statistical}, and the latter in \cite{inverse,DS17}. We adopt the latter approach in this article; to the author's knowledge, multiplicative noise models have received very little (if any) explicit study from this approach.

An explicit relation between Bayesian and deterministic approaches lies in finding modes of the posterior distribution. When the state space is finite-dimensional, the relation is clear due to existence of a Lebesgue measure, and in infinite dimensions direct relations have been proved for certain choices of prior \cite{DLSV13,agapiou2018sparsity} once defining the notion of mode appropriately \cite{DLSV13,helin2015maximum,lie2018equivalence}. In principle one may desire to choose a prior with a direct relation to total variation regularization in this manner, given how frequently such a regularization is used in deterministic inversion.
However, the probabilistic analogue is not robust with respect to discretization level \cite{lassas2004can}, and so if edge preservation is desired alternative priors need to be considered in high dimensions. Alternative priors that have recently been developed with edge preservation in mind include Besov \cite{lassas2009discretization,agapiou2018sparsity} and Cauchy \cite{markkanen2019cauchy,sullivan2017well} priors. Such priors leads to some form of sparsity in MAP estimates, analogously to TV regularized optimizers, though they can have other undesirable properties.

In this article we study the posterior distribution arising when the purely multiplicative noise model (ii) and mixed noise model (iii) are employed, proving existence and well-posedness under weak conditions on the choice of prior. Additionally, when the prior is Gaussian, we show existence of MAP estimators, which are characterized as solutions to certain variational problems. We then focus on the case when both the noise and the prior are Gaussian in the mixed noise model, and prove consistency of the MAP estimators in the small noise limit; such results have been shown previously in the case of additive noise \cite{DLSV13}. Though we assume the prior is Gaussian for the latter result, it may be possible to generalize the results to Besov priors analogously to what has been done for problems with additive noise \cite{agapiou2018sparsity}. Throughout we consider only the case where $Y = \mathbb{R}^J$ is finite dimensional; the reasons for this restriction will be discussed later.

\subsection{Our contributions}
\begin{itemize}
\item We formulate the Bayesian inverse problems of recovering the posterior distribution on the unknown state, subject to nonlinear observations perturbed by (a) multiplicative noise and (b) a mixture of multiplicative and additive noise. In both cases, we provide conditions to ensure that the posteriors are continuous with respect to perturbations of the data.
\item We observe that a mixed noise model is more robust than a purely multiplicative noise model, allowing for milder conditions on the forward map.
\item We prove that with mixed Gaussian noise and a Gaussian prior, MAP estimators exist and are consistent in the small noise limit. We also provide motivation for the corresponding result for the large data limit.
\end{itemize}

\section{The Bayesian approach}
We first outline the general Bayesian approach to inversion on infinite-dimensional state spaces, wherein there is no Lebesgue measure to use as a reference. We then justify our present restriction to a finite-dimensional data space $Y = \R^J$.

\subsection{Bayes' theorem and the posterior distribution}
\label{ssec:bayes}
The inverse problems stated in the introduction are typically ill-posed, for example when $X$ has a higher dimension than $Y$. By placing a prior distribution on the state $u$, we can define the solutions of the inverse problems to be the resulting posterior distributions. Well-posedness can then be interpreted to mean that a small change in the data causes only a small change in the posterior distribution, with respect to some metric on measures. 

Assume that our data $y$ arises from any of the three noise models in the introduction. Let $\mathbb{Q}_0$ denote the Lebesgue measure on $Y$, and choose a prior distribution $\mu_0$ on $X$. Define the product measure $\nu_0(\dee u,\dee y) = \mu_0(\dee u)\mathbb{Q}_0(\dee y)$ on $X\times Y$. We define the probability measure $\mathbb{Q}_u$ on $Y$ to be the law of $y|u$ under $\nu_0$. Finally define the probability measure $\nu(\dee u,\dee y) = \mu_0(\dee u)\mathbb{Q}_u(\dee y)$ on $X\times Y$. Assume that $\mathbb{Q}_u \ll \mathbb{Q}_0$ so that there exists $\varphi:X\times Y\rightarrow\mathbb{R}$ with 
\[
\frac{\dee \mathbb{Q}_u}{\dee \mathbb{Q}_0}(y) = \varphi(u,y).
\]
We write $\varphi(u,y) = \exp(-\Phi(u;y) - \Psi(y))$ where $\Phi:X\times Y\rightarrow\mathbb{R}$ and $\Psi:Y\rightarrow\mathbb{R}$, noting that both are only unique up to translation by a function purely dependent on the data. Then since $\nu_0$ is a product measure, we have that
\[
\frac{\dee \nu}{\dee \nu_0}(u,y) = \frac{\dee \mathbb{Q}_u}{\dee \mathbb{Q}_0}(y) = \exp(-\Phi(u;y) - \Psi(y)).
\]

With the above notation in place, we can state the following version of Bayes' theorem, as given in \cite{DS17}.

\begin{theorem}[Bayes' theorem]
\label{thm:bayes}
Assume $\Phi:X\times Y\rightarrow\mathbb{R}$ is $\nu_0$-measurable and that, for $y$ $\mathbb{Q}_0$-a.s.
\[
Z = \int_X \exp(-\Phi(u;y))\,\mu_0(\dee u) > 0.
\]
Then the conditional distribution of $u|y$ exists and is written $\mu^y$. Furthermore, $\mu^y \ll \mu_0$ and, for $y$ $\mathbb{Q}_0$-a.s.,
\[
\frac{\dee \mu^y}{\dee \mu_0}(u) = \frac{1}{Z}\exp(-\Phi(u;y)).
\]
\end{theorem}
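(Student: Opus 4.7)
The plan is to construct $\mu^y$ directly as the candidate posterior with the stated density, then verify that it serves as a regular conditional distribution of $u$ given $y$ under $\nu$, appealing to Fubini--Tonelli to disintegrate the joint law.

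First, I would define, for each $y$ with $Z(y) := \int_X \exp(-\Phi(u;y))\,\mu_0(\dee u) \in (0,\infty)$, the probability measure
\[
\mu^y(\dee u) := \frac{1}{Z(y)} \exp(-\Phi(u;y))\,\mu_0(\dee u).
\]
Finiteness of $Z(y)$ for $\mathbb{Q}_0$-a.e.\ $y$ (a point not assumed outright, but needed for $\mu^y$ to make sense) would follow from the fact that the joint Radon--Nikodym density $\exp(-\Phi(u;y) - \Psi(y))$ integrates to $1$ against $\nu_0$: by Tonelli, $\int_Y \exp(-\Psi(y)) Z(y)\,\mathbb{Q}_0(\dee y) = 1$, so $Z(y) < \infty$ for $\mathbb{Q}_0$-a.e.\ $y$. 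Measurability of $Z$ in $y$ (and hence of $y \mapsto \mu^y(A)$ for each measurable $A\subset X$) also comes from Tonelli applied to $(u,y)\mapsto \one_A(u)\exp(-\Phi(u;y))$, using the assumed $\nu_0$-measurability of $\Phi$.

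Next, I would verify that $\mu^y$ is indeed a version of the regular conditional distribution of $u$ given $y$ under $\nu$. Writing $\pi_Y$ for the marginal of $\nu$ on $Y$, it suffices to show that for every measurable $A\subset X$ and $B\subset Y$,
\[
\nu(A\times B) = \int_B \mu^y(A)\,\pi_Y(\dee y).
\]
Starting from the definition of $\nu$ and its density with respect to $\nu_0$,
\[
\nu(A\times B) = \int_B \int_A \exp(-\Phi(u;y) - \Psi(y))\,\mu_0(\dee u)\,\mathbb{Q}_0(\dee y) = \int_B e^{-\Psi(y)} Z(y)\,\mu^y(A)\,\mathbb{Q}_0(\dee y),
\]
where Tonelli justifies the swap. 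Taking $A = X$ identifies $\pi_Y(\dee y) = e^{-\Psi(y)} Z(y)\,\mathbb{Q}_0(\dee y)$, and substituting gives the desired disintegration. This simultaneously exhibits the density $\dee\mu^y/\dee\mu_0 = \exp(-\Phi(u;y))/Z(y)$ and shows $\mu^y \ll \mu_0$.

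The main obstacle is bookkeeping rather than analysis: one must be careful that the representation $\varphi(u,y)= \exp(-\Phi(u;y)-\Psi(y))$ is well-defined on the $\nu_0$-null set complement, and that all integrals, measurability assertions, and null-set handling transfer cleanly under the product structure of $\nu_0$. A minor subtlety is that $\Psi$ is defined only up to a function of $y$, so one should verify that the constructed $\mu^y$ does not depend on this choice --- which is immediate since $\Psi$ cancels between the numerator $\exp(-\Phi)$ and the implicit normalization through $Z$. Everything else reduces to Tonelli's theorem and the uniqueness (up to $\pi_Y$-null sets) of regular conditional probabilities on the Polish space $X$.
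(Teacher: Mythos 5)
Your argument is correct: the construction of the candidate measure, the Tonelli argument giving $Z(y)<\infty$ for $\mathbb{Q}_0$-a.e.\ $y$ and the measurability of $y\mapsto\mu^y(A)$, and the verification of the disintegration identity $\nu(A\times B)=\int_B\mu^y(A)\,\pi_Y(\dee y)$ together establish the theorem. The paper itself offers no proof --- it quotes the result from \cite{DS17} --- and your route is essentially the standard argument given in that reference, so there is nothing substantive to compare.
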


Our task is then, given appropriate regularity conditions on the forward map $\mathcal{G}$, the prior $\mu_0$ and the noise, to show that the assumptions of this theorem hold for the different noise models.

\subsection{Data dimensionality and measure singularity}
Above we have made the restriction to finite-dimensional data, $Y = \mathbb{R}^J$. Bayes' theorem may still be formulated similarly to the above when $Y$ is a general separable Banach space, however problems can arise due to issues with measures on such spaces. One such issue is the lack of Lebesgue measure -- instead we would have to take, for example, the dominating measure $\mathbb{Q}_0$ to be Gaussian. However, The Feldman-H\'{a}jek theorem tells us that Gaussians on infinite dimensional spaces are either equivalent or singular: there can be no one-way absolute continuity. By transitivity, we would hence require that $\mathbb{Q}_u\sim\mathbb{Q}_v$ for all $u,v \in X$. With purely additive noise this is easily resolved using the Cameron-Martin theorem: it is sufficient that $\G(u)$ belongs to the Cameron-Martin space of $\Q_0$ for each $u \in X$. With the multiplicative/mixed noise models it is more complicated, since $\Q_u$ is not simply a translation of $\Q_0$.

Though Bayes' theorem can still be formulated abstractly even with measure singularity problems, working directly with the regular conditional probability distributions, the useful property of absolute continuity with respect to the prior distribution is generally lost. As a result, almost-sure properties of the prior, such as sample regularity, do not necessarily pass to the posterior. Another issue related to the measure singularity is when studying the well-posedness of the Bayesian inverse problem: the continuity of the posterior with respect to the data is typically quantified in the Hellinger distance,
\[
d_{\mathrm{Hell}}(\mu,\mu')^2 := \frac{1}{2}\int\left(\frac{\dee \mu}{\dee \nu}(u) - \frac{\dee \mu'}{\dee \nu}(u)\right)^2\,\nu(\dee u).
\]
Here $\nu$ is any probability measure with $\mu,\mu' \ll \nu$. Note that if $\mu$ and $\mu'$ are singular, then $d_{\mathrm{Hell}}(\mu,\mu') = 1$. If an arbitrarily small perturbation in the data leads to singular measures, there can hence be no continuity with respect to the Hellinger distance.

It is possible to define a more general model that incorporates all three of the models described in the introduction; such a model is discussed in the conclusions. For clarity of exposition we restrict here to the explicit multiplicative and mixed noise models as given above -- this allows us to directly compare the assumptions required between the two models. In the future we aim to analyse the general model with infinite-dimensional data.

\section{Existence and well-posedness}
We consider the questions of existence of the posterior measure and sensitivity of the posterior with respect to perturbations of the data, for both the purely multiplicative and mixed noise models. We provide appropriate assumptions for existence and well-posedness of the posteriors in both cases, and observe that mixed noise model allows for weaker assumptions on the forward map. We also verify existence of MAP estimators under these assumptions, the consistency of which will be shown in the following section. Throughout this section, $C$ denotes a generic constant; its dependence on certain parameters is made explicit when relevant.

\subsection{Purely multiplicative noise model}
We first consider the case where the noise on the data is purely multiplicative:
\[
y = \eta_m\mathcal{G}(u)
\]
where $\eta_m$ has Lebesgue density $\rho_m$. Assuming that every component of $\mathcal{G}_j$ of $\mathcal{G}$ is positive and that $\eta_m$ and $u$ are independent, it can be checked that the conditional distribution $y|u$ has density
\[
\rho(y|u) = \frac{1}{\det G(u)}\rho_m(G(u)^{-1}y),
\]
where $G(u) = \mathrm{diag}(\mathcal{G}_j(u)) \in \R^{J\times J}$. The negative log-likelihood $\Phi:X\times Y\rightarrow \mathbb{R}$ is hence given by
\[
\Phi(u;y) = \log\det G(u) - \log\rho_m(G(u)^{-1}y).
\]
If the observations are independent so that $\rho_m$ factors as $\prod_{j=1}^J\rho^j_m$,  this may be written
\[
\Phi(u;y) = \sum_{j=1}^J\left(\log\mathcal{G}_j(u) - \log\rho_m^j\left(\frac{y_j}{\mathcal{G}_j(u)}\right)\right).
\]

\begin{remark}
If we assume in addition that the noise is positive, then the logarithm of the data can be considered to be corrupted by additive noise:
\[
\hat{y} = \log{y} = \log{\mathcal{G}(u)} + \log{\eta_m} = \hat{\mathcal{G}}(u) + \hat{\eta}_m
\]
where the logarithm is defined componentwise. We may write down the density $\hat{\rho}_m$ of $\hat{\eta}_m$:
\[
\hat{\rho}_m(y_1,\ldots,y_J) = \exp\bigg(\sum_{j=1}^J y_j\bigg)\rho_m(e^{y_1},\ldots,e^{y_J}).
\]
Then we have that the density $\hat{\rho}(\hat{y}|u)$ of the law of $\hat{y}|u$ is given by $\hat{\rho}(\hat{y}|u) = \hat{\rho}_m(\hat{y}-\hat{\mathcal{G}}(u))$. The negative log-likelihood is then given in terms of the original data as
\begin{align*}
-\log\hat{\rho}(\hat{y}|u) &= -\log\hat{\rho}_m(\log(y)-\hat{\mathcal{G}}(u))\\
&= -\sum_{j=1}^J(\log{y_j}-\log{\mathcal{G}_j(u)})  -\log\rho_m(e^{\log(y_1)-\log\mathcal{G}_1(u)},\ldots,e^{\log(y_J)-\log\mathcal{G}_J(u)})\\
&=\log\det G(u)- \log\rho_m(G(u)^{-1}y)
\end{align*}
after dropping the terms that depend only on the data. This coincides with $\Phi$ as given above.
\end{remark}

\begin{example}
\label{ex:mult}
A commonly used choice of distribution in image denoising is such that each component of $\eta_m$ is independently and identically Gamma distributed with mean $1$ and precision $\alpha > 0$ so that
\[
\rho_m(y) \propto \prod_{j=1}^J y_j^{\alpha-1}\exp(-\alpha y_j)\mathds{1}_{(0,\infty)}(y_j).
\]
In this case, we have (up to addition of purely data-dependent terms)
\[
\Phi(u;y) = \alpha\sum_{j=1}^J\left(\log\mathcal{G}_j(u) + \frac{y_j}{\mathcal{G}_j(u)}\right).
\]
Suppose that $\G(u)$ is a function on a domain $D\subseteq\R^d$, and $\{\G_j(u)\}$ is a sequence of equidistributed point evaluations of $\G(u)$, with corresponding observations $\{y_j\}$. Then with the choice $\alpha = \alpha_0/J$, so that noise level is increased as more observations are taken, we see that
\[
\Phi(u;y) \to \alpha_0\int_D \left(\log\G(u)+ \frac{y}{\G(u)}\right)
\]
as $J\to\infty$. This is precisely the form of misfit \cref{eq:gamma}.
\end{example}

When we do not have any additive noise, we require that $\mathcal{G}$ is bounded away from zero. If $\mathcal{G}(u)$ were allowed to approach zero, then the data could approach infinite precision without modifying the variance of the noise -- the posterior is then singular with respect to the prior instead of absolutely continuous, preventing well-posedness in a number of metrics.

We place the following assumptions on the noise, the prior and $\mathcal{G}$.

\begin{assumptions}
\label{assump:mult}
There exists $X'\subseteq X$ such that
\begin{enumerate}[(i)]
\item $\mathcal{G}$ is polynomially bounded and bounded away from zero on $X'$: there exist $C > 0$, $p \geq 1$ and $\eps > 0$ such that for all $u \in X'$,
\[
\eps \leq |\mathcal{G}(u)|_Y \leq C(1 + \|u\|_X^p).
\]
\item $\mathcal{G}$ is Lipschitz on bounded subsets of $X'$: for every $u_1,u_2 \in X'$ with $\|u_1\|_X,\|u_2\|_X < r$, there exists $L(r) > 0$ such that
\[
|\mathcal{G}(u_1)-\mathcal{G}(u_2)|_Y \leq L(r)\|u_1-u_2\|_X.
\]
\item $\rho_m$ is Lipschitz and bounded, with support given by
\[
\supp(\rho_m) = A_1\times\ldots\times A_J,
\]
where each $A_j \in \{\mathbb{R},(-\infty,0],[0,\infty)\}$. Additionally there exist $C,q > 0$ such that for all $r>0$ sufficiently small, 
\[
\inf\{\rho_m(z)\;|\;z \in \supp(\rho_m), |z| = r\} \geq Cr^q.
\]
\item $\mu_0(X') = 1$ and for all $k \in \mathbb{N}$,
\[
\int_{X'}\|u\|_X^k\,\mu_0(\dee u) < \infty.
\]
\end{enumerate}
\end{assumptions}

It is straightforward to verify that the Gamma and Gaussian distribution in \cref{ex:mult} satisfy \cref{assump:mult}(iii). The above assumptions mean that the potential $\Phi$ has the following properties. These properties are essentially the assumptions in section 4 of \cite{DS17} used to establish existence and well-posedness of the posterior, combined with the additional assumptions the provide existence of MAP estimators \cite{DLSV13}. In what follows, denote $Y' = \intr(\supp(\rho_m))$.

\begin{proposition}
\label{prop:wpmult}
Let \cref{assump:mult} hold. Then $\Phi:X'\times Y' \rightarrow \mathbb{R}$ satisfies:
\begin{enumerate}[(i)]
\item There exists a function $M_0:\mathbb{R}^+\times\mathbb{R}^+\rightarrow\mathbb{R}^+$ monotonic non-decreasing separately in each argument such that for all $u \in X'$ and $y \in Y'$,
\[
\Phi(u;y) \leq M_0(|y|_Y,\|u\|_X).
\]
Moreover, $\exp(M_0(|y|_Y,\|u\|_X))$ is polynomially bounded in $u$ for all $y \in Y$.
\item There exist functions $M_i:\mathbb{R}^+\times\mathbb{R}^+\rightarrow\mathbb{R}^+$, $i=1,2$ monotonic non-decreasing separately in each argument, and with $M_2$ strictly positive, such that for all $u \in X'$, $y,y_1,y_2 \in Y'\cap B_Y(0,r)$,
\begin{align*}
\Phi(u;y) &\geq -M_1(r,\|u\|_X),\\
|\Phi(u;y_1)-\Phi(u;y_2)| &\leq M_2(r,\|u\|_X)|y_1-y_2|_Y.
\end{align*}
\item For each $y \in Y'$ and $u_1,u_2 \in X'$ with $\|u_1\|_X,\|u_2\|_X < r$, there exists $M_3:\mathbb{R}^+\times\mathbb{R}^+\rightarrow\mathbb{R}^+$ such that
\[
|\Phi(u_1;y) - \Phi(u_2;y)| \leq M_3(|y|_Y,r)\|u_1-u_2\|_X.
\]
\end{enumerate}
\end{proposition}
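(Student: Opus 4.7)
The plan is to write $\Phi(u;y) = \sum_{j=1}^J \log \mathcal{G}_j(u) - \log \rho_m(G(u)^{-1}y)$ and to analyze the two summands separately, repeatedly exploiting the two-sided bound $\eps \leq |\mathcal{G}(u)|_Y \leq C(1+\|u\|_X^p)$ (which I will use to deduce that each individual $\mathcal{G}_j(u)$ lies in an analogous interval, since we already assumed that each $\mathcal{G}_j$ is positive and the lower bound is on the norm) and the structural assumptions on $\rho_m$. The first summand is the easier one: $\log \mathcal{G}_j(u)$ is Lipschitz in $u$ on bounded sets (composition of the Lipschitz map $\mathcal{G}_j$ with $\log$, which is Lipschitz on $[\eps,\infty)$), uniformly bounded below by $J\log\eps$, and uniformly bounded above by $J\log(C(1+\|u\|_X^p))$. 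All three parts of the proposition need only the analogous bounds for $-\log\rho_m(G(u)^{-1}y)$, so the rest of the plan focuses on that term.

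For the upper bound in (i) I would split the analysis of $\rho_m(G(u)^{-1}y)$ into two regimes according to the size of $|G(u)^{-1}y|_Y$. On the one hand, since $\mathcal{G}_j(u) \geq \eps$, we have $|G(u)^{-1}y|_Y \leq |y|_Y/\eps$, so this vector stays in a bounded subset of $\supp \rho_m$; when $\|u\|_X$ is bounded the components are also bounded away from the boundary of $\supp \rho_m$ (because $y \in Y' = \intr(\supp\rho_m)$ and $\mathcal{G}_j(u)$ is bounded above on bounded sets), so the continuous positive function $\rho_m$ has a positive minimum there. On the other hand, as $\|u\|_X \to \infty$ the upper bound on $\mathcal{G}_j$ forces $|G(u)^{-1}y|_Y \to 0$, and here I invoke the polynomial-decay assumption $\rho_m(z) \geq C|z|_Y^q$, combined with $|G(u)^{-1}y|_Y \geq c(y)/(1+\|u\|_X^p)$, to get $-\log\rho_m(G(u)^{-1}y) \leq pq\log(1+\|u\|_X) + C(y)$. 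Collecting everything yields an $M_0$ of the form $C(y)(1 + \log(1+\|u\|_X))$, whose exponential is polynomial in $\|u\|_X$.

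The lower bound in (ii) is immediate: $\log\rho_m$ is bounded above by $\log\|\rho_m\|_\infty$, so $\Phi(u;y) \geq J\log\eps - \log\|\rho_m\|_\infty$, a constant. For the Lipschitz-in-$y$ estimate I first bound $|G(u)^{-1}y_1 - G(u)^{-1}y_2|_Y \leq |y_1-y_2|_Y/\eps$ using $\mathcal{G}_j \geq \eps$, then use Lipschitz continuity of $\rho_m$ together with a uniform positive lower bound on $\rho_m(G(u)^{-1}y_i)$ (obtained from the same case analysis as in (i), now restricted to $|y_i|_Y \leq r$, giving a lower bound depending only on $r$ and $\|u\|_X$) to convert the Lipschitz estimate for $\rho_m$ into one for $\log\rho_m$ via $|\log a - \log b| \leq |a-b|/\min(a,b)$. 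Part (iii) follows the same template: the identity $1/\mathcal{G}_j(u_1) - 1/\mathcal{G}_j(u_2) = (\mathcal{G}_j(u_2)-\mathcal{G}_j(u_1))/(\mathcal{G}_j(u_1)\mathcal{G}_j(u_2))$, combined with the Lipschitz bound $L(r)$ on $\mathcal{G}$ and $\mathcal{G}_j \geq \eps$, shows $u \mapsto G(u)^{-1}y$ is Lipschitz on $\{\|u\|_X < r\}$ with constant $O(|y|_Y L(r)/\eps^2)$; composing with the (locally) Lipschitz $\log\rho_m$ as above and adding the Lipschitz estimate for $\log\mathcal{G}_j(u)$ yields the desired bound.

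The main obstacle is the upper bound in (i): nowhere else does one have to reason about an asymptotic regime in $\|u\|_X$, and the delicate point is that assumption~(iii) only provides quantitative information about $\rho_m$ near the origin, so one has to argue separately that $\rho_m(G(u)^{-1}y)$ is bounded below on compact subsets of $Y'$ (using $y \in Y'$ and the $u$-dependent two-sided bound on $\mathcal{G}_j$) and patch the two regimes together. The remaining estimates are local-Lipschitz bookkeeping, with the recurring trick being to convert a lower bound and a Lipschitz bound on $\rho_m$ into a Lipschitz bound on $\log\rho_m$.
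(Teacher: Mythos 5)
Your proposal follows essentially the same route as the paper's proof: the same constant lower bound $J\log\eps-\log\|\rho_m\|_\infty$ for (ii), the same conversion of a Lipschitz bound on $\rho_m$ into one on $\log\rho_m$ via a positive lower bound on the density (the paper phrases your $|\log a-\log b|\leq|a-b|/\min(a,b)$ as a mean value theorem step), and the same $\eps^{-2}L(r)$ estimate for $u\mapsto G(u)^{-1}y$ in (iii). For (i), your explicit two-regime split (compactness and positivity of $\rho_m$ for $|G(u)^{-1}y|$ bounded away from zero, the polynomial decay of $\rho_m$ near the origin otherwise) is just a more verbose organization of the paper's single infimum of $\rho_m$ over the compact line segment $\{z: z/|z|_Y = G(u)^{-1}y/|G(u)^{-1}y|_Y,\ \min\{|y|_Y,|y|_Y^{-1}\}/(C(1+\|u\|_X^p))\leq|z|_Y\leq|y|_Y/\eps\}$, and it yields the same $M_0$ with $\exp(M_0)$ polynomial in $\|u\|_X$.

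One step you should repair: the parenthetical claim that positivity of each $\mathcal{G}_j$ together with the lower bound $|\mathcal{G}(u)|_Y\geq\eps$ lets you \emph{deduce} a componentwise bound $\mathcal{G}_j(u)\geq\eps'$ is false --- take $\mathcal{G}(u)=(1,\delta)$ with $\delta\downarrow 0$. A componentwise lower bound is genuinely needed (your $|G(u)^{-1}(y_1-y_2)|_Y\leq|y_1-y_2|_Y/\eps$ and the bound $\det G(u)\geq\eps^J$ both require it, as do the paper's uses of $\|G(u)^{-1}\|_{\mathrm{op}}\leq\eps^{-1}$), and the paper simply reads \cref{assump:mult}(i) that way without deriving it. So state the componentwise interpretation as the hypothesis rather than attempting to infer it from the norm bound. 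A second, smaller imprecision: the upper bound on $\mathcal{G}$ does not \emph{force} $|G(u)^{-1}y|_Y\to 0$ as $\|u\|_X\to\infty$; it only permits $|G(u)^{-1}y|_Y$ to be as small as $c(y)/(1+\|u\|_X^p)$, which is the bound your argument actually uses, so the conclusion stands once the wording is corrected.
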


\begin{proof}
\begin{enumerate}[(i)]
\item Let $y \in Y'$. Observe that since $G(u)$ acts on $y$ componentwise, with each $\mathcal{G}_j(u) > 0$, $G(u)^{-1}y$ lies in the same orthant as $y$: for each $u$, $G(u)^{-1}y$ is a rescaling of $y$ along a line through the origin. By the bounds on $\mathcal{G}$ we have
\begingroup
\allowdisplaybreaks
\begin{align*}
\rho(y|u) &= \frac{1}{\det G(u)}\rho_m(G(u)^{-1}y)\\
&\geq \frac{1}{C(1+\|u\|_X^{pJ})}\rho_m(G(u)^{-1}y)\\
&= \frac{1}{C(1+\|u\|_X^{pJ})}\inf\big\{\rho_m(z)\;|\;z \in Y', z/|z|_Y = G(u)^{-1}y/|G(u)^{-1}y|_Y,\\
&\hspace{9cm}|z|_Y = |G(u)^{-1}y|_Y\big\}\\
&\geq \frac{1}{C(1+\|u\|_X^{pJ})}\inf\bigg\{\rho_m(z)\,\bigg|\,z \in Y', z/|z|_Y = G(u)^{-1}y/|G(u)^{-1}y|_Y,\\
&\hspace{7.2cm}\frac{|y|_Y}{C(1+\|u\|_X^p)} \leq |z|_Y \leq |y|_Y/\eps \bigg\}\\
&\geq \frac{1}{C(1+\|u\|_X^{pJ})}\inf\bigg\{\rho_m(z)\,\bigg|\,z \in Y', z/|z|_Y = G(u)^{-1}y/|G(u)^{-1}y|_Y,\\
&\hspace{6.8cm}\frac{\min\{|y|_Y,|y|_Y^{-1}\}}{C(1+\|u\|_X^p)} \leq |z|_Y \leq |y|_Y/\eps \bigg\}\\
&=: m_0(|y|_Y,\|u\|_X).
\end{align*}
\endgroup
Since $\eps > 0$, the infimum is taken over a compact line segment contained within $Y'$. $\rho_m$ is positive and continuous everywhere, hence $m_0$ is positive. Moreover it is non-increasing in both arguments -- the min term ensures this. We then have
\[
\Phi(u;y) = -\log\rho(y|u) \leq -\log m_0(|y|_Y,\|u\|_X) =: M_0(|y|_Y,\|u\|_X).
\]
It remains to show that $\exp(M_0(|y|_Y,\|u\|_X))$ is polynomially bounded in $u$. Since the lower bound in the infimum is the only part that depends on $u$, it suffices to show that
\[
h(u) := \inf\left\{\rho_m(z)\,\bigg|\,|z|_Y = \frac{\min\{|y|_Y,|y|_Y^{-1}\}}{C(1+\|u\|_X^p)}\right\}^{-1}
\]
is polynomially bounded.  \cref{assump:mult}(iii) then tells us that for large enough $\|u\|_X$ we have
\[
h(u) \leq C\left(\frac{\min\{|y|_Y,|y|_Y^{-1}\}}{1+\|u\|_X^p}\right)^{-q}
\]
which gives the required boundedness.
\item Let $u \in X'$ and $y \in Y'$ with $|y|_Y < r$. By the boundedness of $\rho_m$, we have $\rho(y|u) \leq C\eps^{-J}$ and so
\begin{align*}
\Phi(u;y) = -\log\rho(y|u) \geq J\log\eps - \log C =: M_1(r,\|u\|_X).
\end{align*}
For the Lipschitz property, let $y_1, y_2 \in Y'$ with $y_1, y_2 < r$. Then we have by the mean value theorem
\begin{align*}
|\Phi(u;y_1) - \Phi(u;y_2)| &= |\log\rho(y_1|u) - \log\rho(y_2|u)|\\
&\leq \frac{1}{\inf_{|y|_Y<r} \rho(y|u)}|\rho(y_1|u) - \rho(y_2|u)|.
\end{align*}
Using the bound from part (i),
\[
\rho(y|u) \geq \exp(-M_0(|y|_Y,\|u\|_X)) \geq \exp(-M_0(r,\|u\|_X))
\]
and so since $\rho_m$ is Lipschitz,
\begin{align*}
|\Phi(u;y_1) - \Phi(u;y_2)| &\leq \exp(M_0(r,\|u\|_X))|\rho(y_1|u) - \rho(y_2|u)|\\
&\leq C\exp(M_0(r,\|u\|_X))\frac{1}{\det G(u)}|G(u)^{-1}(y_1-y_2)|_Y\\
&\leq C\exp(M_0(r,\|u\|_X))\eps^{-J}\|G(u)^{-1}\|_{\mathrm{op}}|y_1-y_2|_Y\\
&\leq  C\exp(M_0(r,\|u\|_X))\eps^{-J-1}|y_1-y_2|_Y\\
&=: M_2(r,\|u\|_X)|y_1-y_2|_Y.
\end{align*}

\item Let $u \in X'$ with $\|u\|_X < r$, and let $y \in Y'$. Then similarly to the above we have by the Lipschitz property of $\rho_m$ and $\mathcal{G}$,
\begin{align*}
|\Phi(u_1;y) - \Phi(u_2;y)| &\leq \exp(M_0(|y|_Y,r))|\rho(y|u_1) - \rho(y|u_2)|\\
&\leq C\exp(M_0(|y|_Y,r))\eps^{-J}|G(u_1)^{-1}y - G(u_2)^{-1}y|_Y\\
&\leq C\exp(M_0(|y|_Y,r))\eps^{-J}|y|_Y\|G(u_1)^{-1} - G(u_2)^{-1}\|_{\mathrm{op}}\\
&= C\exp(M_0(|y|_Y,r))\eps^{-J}|y|_Y\max_j |\mathcal{G}_j(u_1)^{-1} - \mathcal{G}_j(u_2)^{-1}|\\
&\leq C\exp(M_0(|y|_Y,r))\eps^{-J-2}|y|_Y\max_j |\mathcal{G}_j(u_1) - \mathcal{G}_j(u_2)|\\
&\leq C\exp(M_0(|y|_Y,r))\eps^{-J-2}|y|_Y|\mathcal{G}(u_1) - \mathcal{G}(u_2)|_Y\\
&\leq C\exp(M_0(|y|_Y,r))\eps^{-J-2}|y|_Y L(r)\|u_1 - u_2\|_X\\
&=: C(r)\|u_1 - u_2\|_X.
\end{align*}
\end{enumerate}
\end{proof}

\begin{theorem}[Existence]
\label{thm:exist_mult}
Let \cref{assump:mult} hold. Assume that $\mu_0(X') = 1$ and that $\mu_0(X\cap B) > 0$ for some bounded set $B$ in $X$. Let $Y'$ denote the interior of the support of $\rho_m$. Then for every $y \in Y'$,
\[
Z(y) := \int_X \exp(-\Phi(u;y))\,\mu_0(\dee u) > 0
\]
and the probability measure $\mu^y$ on $X$ given by
\[
\frac{\dee \mu^y}{\dee \mu_0}(u) = \frac{1}{Z(y)}\exp(-\Phi(u;y))
\]
is well-defined.
\end{theorem}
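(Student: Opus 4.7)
The plan is to verify the hypotheses of Bayes' theorem (\cref{thm:bayes}): namely, that $\Phi$ is $\nu_0$-measurable, that $Z(y)>0$ for $\Q_0$-a.e.\ $y\in Y'$, and (so that $\mu^y$ is genuinely a probability measure) that $Z(y)<\infty$ as well. The substantive estimates have already been packaged into \cref{prop:wpmult}, so the argument reduces to bookkeeping.

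First I would record measurability of $\Phi$: the explicit form $\Phi(u;y) = \log\det G(u) - \log\rho_m(G(u)^{-1}y)$ is a composition of continuous maps thanks to \cref{assump:mult}(ii)--(iii), so $\Phi$ is jointly continuous on $X'\times Y'$ and hence Borel measurable.

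For positivity of $Z(y)$ I would fix $y\in Y'$ and exploit the bounded set $B\subseteq X$ supplied by the hypothesis. Since $\mu_0(X')=1$, we have $\mu_0(X'\cap B) = \mu_0(X\cap B)>0$, and with $R := \sup_{u\in B}\|u\|_X<\infty$, the upper bound from \cref{prop:wpmult}(i) gives
\[
Z(y) \;\geq\; \int_{X'\cap B}\exp(-M_0(|y|_Y,R))\,\mu_0(\dee u) \;=\; \exp(-M_0(|y|_Y,R))\,\mu_0(X'\cap B) \;>\; 0,
\]
using that $M_0(|y|_Y,R)$ is finite.

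For finiteness of $Z(y)$ I would bypass \cref{prop:wpmult}(ii) and work directly from the explicit formula: \cref{assump:mult}(i) yields $\det G(u)\geq\eps^J$, and $\rho_m$ is bounded by \cref{assump:mult}(iii), so
\[
\exp(-\Phi(u;y)) \;=\; \frac{\rho_m(G(u)^{-1}y)}{\det G(u)} \;\leq\; \frac{\|\rho_m\|_\infty}{\eps^J}
\]
uniformly in $u\in X'$, giving $Z(y)\leq\|\rho_m\|_\infty/\eps^J$. \cref{thm:bayes} then applies and delivers the stated density. There is no genuine obstacle here; the only nontrivial step is recognising that the hypothesis $\mu_0(X\cap B)>0$ is exactly what converts the pointwise upper bound of \cref{prop:wpmult}(i) into a strictly positive lower bound on the normalising constant.
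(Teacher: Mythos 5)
Your proposal is correct and takes essentially the same route as the paper: the paper's proof simply delegates to Theorem 4.3 of \cite{DS17}, whose hypotheses amount to exactly the two facts you verify by hand --- positivity of $Z(y)$ from the local upper bound $M_0$ of \cref{prop:wpmult}(i) combined with the positive-measure bounded set $B$, and integrability of $\exp(M_1(r,\|u\|_X))$, which is trivial since $M_1$ is constant and is precisely your uniform bound $\exp(-\Phi(u;y))\leq \|\rho_m\|_\infty\eps^{-J}$ on the density. Your write-up just unpacks that citation into a self-contained verification of the hypotheses of \cref{thm:bayes}.
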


\begin{proof}
This follows from an application of Theorem 4.3 in \cite{DS17}. It is required that $\exp(M_1(r,\|u\|_X)) \in L^1_{\mu_0}(X;\mathbb{R})$ for all $r > 0$, which holds in our case since $M_1$ is constant.
\end{proof}

\begin{theorem}[Well-posedness]
Let \cref{assump:mult} hold. Assume that $\mu_0(X') = 1$ and that $\mu_0(X\cap B) > 0$ for some bounded set $B$ in $X$. Let $Y'$ denote the interior of the support of $\rho_m$. Then there is a $C = C(r) > 0$ such that, for all $y,y' \in B_Y(0,r)\cap Y'$,
\[
d_{\mathrm{Hell}}(\mu^{y},\mu^{y'}) \leq C|y-y'|_Y.
\]
\end{theorem}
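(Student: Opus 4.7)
The plan is to apply the standard Bayesian well-posedness machinery of \cite{DS17} (specifically an analogue of Theorem 4.5 there) using the three structural properties of $\Phi$ already established in \cref{prop:wpmult}. First I would fix $r>0$, take $y,y'\in B_Y(0,r)\cap Y'$, and derive a positive lower bound for the normalisation constant $Z(y)$. Using $\Phi(u;y)\le M_0(r,\|u\|_X)$ and the polynomial-in-$u$ bound on $\exp(M_0)$ from \cref{prop:wpmult}(i), together with the moment bound \cref{assump:mult}(iv), one integrates over a bounded set $B$ of positive $\mu_0$-measure to obtain $Z(y)\ge c(r)>0$ uniformly in $y\in B_Y(0,r)\cap Y'$.

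Next I would expand the Hellinger integrand in the standard way,
\begin{align*}
d_{\mathrm{Hell}}(\mu^y,\mu^{y'})^2
&\le \frac{1}{Z(y)}\int_X\bigl(e^{-\Phi(u;y)/2}-e^{-\Phi(u;y')/2}\bigr)^2\mu_0(\dee u) \\
&\quad + \bigl(Z(y)^{-1/2}-Z(y')^{-1/2}\bigr)^2 \int_X e^{-\Phi(u;y')}\mu_0(\dee u)
=: I_1+I_2.
\end{align*}
For $I_1$, I apply the mean value theorem to $t\mapsto e^{-t/2}$ combined with the lower bound $\Phi(u;y)\ge -M_1(r,\|u\|_X)$ and the data-Lipschitz bound of \cref{prop:wpmult}(ii), yielding an integrand dominated by $\exp(M_1(r,\|u\|_X))\,M_2(r,\|u\|_X)^2|y-y'|_Y^2$. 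Since in our setting $M_1$ is constant and $M_2(r,\|u\|_X)\le C\exp(M_0(r,\|u\|_X))\eps^{-J-1}$ is polynomially bounded in $\|u\|_X$, \cref{assump:mult}(iv) makes this integrable, giving $I_1\le C(r)|y-y'|_Y^2$.

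For $I_2$, I use $Z(y')\le \int e^{M_1(r,\|u\|_X)}\mu_0(\dee u)<\infty$ and the elementary estimate $|Z(y)^{-1/2}-Z(y')^{-1/2}|\le \tfrac12 c(r)^{-3/2}|Z(y)-Z(y')|$ afforded by the uniform lower bound on the $Z$'s; then $|Z(y)-Z(y')|$ is controlled by the same integrand as in $I_1$ (without the square) via the mean value theorem applied to $t\mapsto e^{-t}$, again giving $I_2\le C(r)|y-y'|_Y^2$. Combining the two bounds and taking a square root delivers the claimed Lipschitz estimate.

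The routine part is the Hellinger expansion; the only genuine content is checking the integrability of $\exp(M_1)M_2^k$ for $k=1,2$ against the prior, which is where \cref{prop:wpmult}(i) (the polynomial-in-$u$ bound on $\exp(M_0)$) and the all-moments condition \cref{assump:mult}(iv) are both essential. I anticipate no substantial obstacle: the potential technical hiccup is making sure the constant $c(r)$ lower-bounding $Z$ is genuinely uniform over $B_Y(0,r)\cap Y'$, but this follows because the bound $\Phi(u;y)\le M_0(r,\|u\|_X)$ is monotone in $|y|_Y$ and does not degenerate on the interior of $\supp(\rho_m)$.
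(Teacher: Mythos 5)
Your proposal is correct and follows essentially the same route as the paper: the paper simply invokes Theorem 4.5 of \cite{DS17} and verifies its hypothesis that $\exp(M_1(r,\|u\|_X))\bigl(1+M_2(r,\|u\|_X)^2\bigr)\in L^1_{\mu_0}(X;\mathbb{R})$, whereas you unfold the proof of that cited theorem (the standard Hellinger decomposition into $I_1+I_2$ with a uniform lower bound on $Z$). The substantive content is identical in both cases --- $M_1$ is constant, $M_2(r,\|u\|_X)=C\exp(M_0(r,\|u\|_X))$ is polynomially bounded in $\|u\|_X$ by \cref{prop:wpmult}(i), and \cref{assump:mult}(iv) supplies the required integrability --- so no gap remains.
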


\begin{proof}
This follows from an application of Theorem 4.5 in \cite{DS17}. It is required that $\exp(M_1(r,\|u\|_X))\left(1 + M_2(r,\|u\|_X)^2\right) \in L^1_{\mu_0}(X;\mathbb{R})$ for all $r > 0$. The previous proposition shows that $M_1$ is constant, and $M_2(r,\|u\|_X) = C\exp(M_0(r,\|u\|_X))$ is polynomially bounded in $u$. The result follows from the assumption that $\mu_0$ has all polynomial moments.
\end{proof}

In the case that the prior $\mu_0$ is Gaussian, we can use the result of  \cref{prop:wpmult} to prove existence of and characterize the modes of the posterior distribution. First recall the definition of MAP estimator in the general Banach space setting \cite{DLSV13}:
\begin{definition}
Let $\mu$ be a measure on a Banach space $X$, and let
\[
z^\delta = \underset{z \in X}{\mathrm{argmax}}\;\mu(B_X(z,\delta)).
\]
Any point $\tilde{z} \in X$ satisfying
\[
\lim_{\delta\downarrow 0} \frac{\mu(B_X(\tilde{z},\delta))}{\mu(B_X(z^\delta,\delta))} = 1
\]
is a MAP estimator for the measure $\mu$.
\end{definition}

The MAP estimators of the posterior measure $\mu^y$ are characterized as minimizers of a certain functional:

\begin{theorem}[Existence of MAP estimators]
\label{thm:map_mult}
Let $\mu_0$ be a Gaussian measure, and let \cref{assump:mult} hold. Let $(E,\|\cdot\|_E)$ denote the Cameron-Martin space associated with the prior measure $\mu_0$. Let $y \in Y'$ and define the functional $I:X\rightarrow\mathbb{R}$ by
\begin{align}
\label{eq:om}
I(u) = 
\begin{cases}
\Phi(u;y) + \frac{1}{2}\|u\|_E^2 & u \in E\\
\infty & u \notin E
\end{cases}
\end{align} 
Then minimizers of $I$ exist in $E$. Moreover, any minimizer of $I$ is a MAP estimator of the posterior measure $\mu^y$, and any MAP estimator of $\mu^y$ minimizes $I$.
\end{theorem}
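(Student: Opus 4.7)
My plan is to reduce the theorem to the abstract MAP estimation framework of Dashti, Law, Stuart and Voss \cite{DLSV13} by verifying their hypotheses using the properties of $\Phi$ established in \cref{prop:wpmult}. The argument splits into two parts: existence of minimizers of $I$ over $E$, and the equivalence ``minimizer of $I$ $\Leftrightarrow$ MAP estimator of $\mu^y$''.

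\textbf{Existence of minimizers.} I would apply the direct method of the calculus of variations. Fix $y \in Y'$. Since $\mu_0(X')=1$ and $\mu_0$ is a centred Gaussian, $X'$ has full measure; moreover the Cameron-Martin space $E$ is contained in $X'$ (otherwise $\mu_0(X')<1$, contradiction with Gaussian $0$-$1$ law considerations). By \cref{prop:wpmult}(ii), $\Phi(u;y)\ge -M_1(|y|_Y,\|u\|_X)$, and since $M_1$ is constant in its second argument (as noted in the proof of the well-posedness theorem) we get $\Phi(\cdot;y)\ge -C(y)$ and hence $I$ is bounded below. Let $(u_n)\subset E$ be a minimizing sequence; from the lower bound on $\Phi$ and $I(u_n)\le I(0)+1$, we deduce that $\|u_n\|_E$ is bounded. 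The closed unit ball of $E$ is weakly compact, so along a subsequence $u_n\rightharpoonup \bar u$ in $E$; by the standard compact embedding of the Cameron-Martin space into $X$ for Gaussian measures on a separable Banach space, $u_n\to \bar u$ strongly in $X$. Combining this with the local Lipschitz property \cref{prop:wpmult}(iii) yields $\Phi(u_n;y)\to \Phi(\bar u;y)$, while $\|\cdot\|_E^2$ is weakly lower semicontinuous on $E$. Therefore $I(\bar u)\le \liminf_n I(u_n)$, so $\bar u$ is a minimizer.

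\textbf{MAP estimators coincide with minimizers of $I$.} Here I would invoke the equivalence theorem (Theorem 3.5 of \cite{DLSV13}), which states that under the following conditions on $\Phi(\cdot;y)$ --- namely, (a) $\Phi$ is bounded below on bounded subsets of $X$, (b) $\Phi$ is locally Lipschitz in $u$, (c) $\Phi$ satisfies a mild uniform growth condition ensuring positivity of prior mass in small balls --- the MAP estimators of $\mu^y$ are exactly the minimizers of $I$ in $E$. Conditions (a) and (b) are immediate from \cref{prop:wpmult}(ii)--(iii). Condition (c) is the technical point and requires the quantitative control on $\mu_0(B_X(z,\delta))$ as $\delta\downarrow 0$ provided by the Gaussian small ball theorem for translates in the Cameron-Martin direction; combined with the polynomial growth of $\exp(M_0(|y|_Y,\|u\|_X))$ from \cref{prop:wpmult}(i) and the fact that $\mu_0$ has all polynomial moments (\cref{assump:mult}(iv)), one gets the required comparison between $\mu^y$-mass and $\mu_0$-mass of small balls. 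The equivalence then follows exactly as in \cite{DLSV13}.

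\textbf{Main obstacle.} The routine part is the existence proof via direct method. The delicate step is verifying the precise hypotheses of the \cite{DLSV13} MAP equivalence theorem --- in particular, ensuring that the upper bound from \cref{prop:wpmult}(i), which grows with $\|u\|_X$, is compatible with the Cameron-Martin small-ball asymptotics. This is why \cref{assump:mult}(iv) demands all polynomial moments of the prior and why \cref{prop:wpmult}(i) is phrased with an explicit polynomial bound on $\exp(M_0)$; these are precisely the ingredients one needs to bound the ``ratio of ball masses'' uniformly along minimizing sequences.
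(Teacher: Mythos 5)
Your proposal is correct and follows essentially the same route as the paper: the paper's proof simply invokes Corollary 3.10 of \cite{DLSV13} after checking, via \cref{prop:wpmult}, that $\Phi(\cdot;y)$ is uniformly bounded below (since $M_1$ is constant), bounded on bounded sets, and locally Lipschitz, with \cref{assump:mult}(iv) supplied by Fernique's theorem. Your additional direct-method argument for existence and the small-ball discussion are just an unpacking of what that cited corollary already provides.
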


\begin{proof}
First note that by Fernique's theorem, \cref{assump:mult}(iv) does indeed hold. The result of the theorem follows from Corollary 3.10 in \cite{DLSV13} once we show that the required assumptions hold. Namely for all $y \in Y'$, $\Phi(\cdot;y)$ is bounded below uniformly by a constant, bounded on bounded sets, and locally Lipschitz. These all follow from  \cref{prop:wpmult}.
\end{proof}

\subsection{Mixed noise model}
We now look at the case where the observations are corrupted by both additive and multiplicative noise. We assume that the data $y \in Y$ is modelled by
\begin{align}
\label{eqn:mixednoise}
y = \eta_m\mathcal{G}(u) + \eta_a,
\end{align}
where the distributions of $\eta_m$ and $\eta_a$ admit respective Lebesgue densities $\rho_m$ and $\rho_a$. Furthermore we assume that $\eta_m$ and $\eta_a$ are independent. It can be verified that the density of conditional distribution $y|u$ takes the following form.

\begin{proposition}
\label{prop:mixed_density}
Let $y$ be given by \cref{eqn:mixednoise} and assume that $u$ is independent of the noise. Then the density $\rho(y|u)$ of the conditional random distribution $y|u$ exists and is given by
\[
\rho(y|u) = \int_{Y} \rho_a(y - G(u)x)\rho_m(x)\,\dee x,
\]
where $G(u) = \mathrm{diag}(\mathcal{G}_j(u))$.
\end{proposition}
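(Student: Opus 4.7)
The plan is to identify $\rho(y|u)$ by testing against an arbitrary bounded measurable function and then reading off the density from the resulting integral. That is, I will show that for every bounded Borel $\varphi:Y\to\R$,
\[
\E[\varphi(y)\mid u] \;=\; \int_Y \varphi(y)\left(\int_Y \rho_a(y-G(u)x)\rho_m(x)\,\dee x\right)\dee y,
\]
and then invoke uniqueness of the conditional density to conclude.

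To carry this out, I would first use the fact that $u$ is independent of $(\eta_m,\eta_a)$ to write the conditional expectation as an ordinary expectation in the noise variables with $u$ frozen: almost surely,
\[
\E[\varphi(y)\mid u] \;=\; \E\bigl[\varphi(\eta_m\mathcal{G}(u)+\eta_a)\,\bigr|\,u=\cdot\bigr]\Big|_{\cdot=u}
\;=\; \int_Y\!\!\int_Y \varphi(G(u)x+z)\,\rho_a(z)\rho_m(x)\,\dee z\,\dee x,
\]
where I also used the independence of $\eta_m$ and $\eta_a$ to factor the joint density and I rewrote $\eta_m\mathcal{G}(u)=G(u)\eta_m$ since $\mathcal{G}$ acts componentwise. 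Fubini is justified since $\varphi$ is bounded and $\rho_a,\rho_m$ are probability densities. Next, for fixed $x$ I perform the translation $z\mapsto y=G(u)x+z$ in the inner integral (unit Jacobian), obtaining
\[
\E[\varphi(y)\mid u] \;=\; \int_Y\!\!\int_Y \varphi(y)\,\rho_a(y-G(u)x)\rho_m(x)\,\dee y\,\dee x,
\]
and another application of Fubini moves the $x$-integral inside, yielding exactly the integrand $\int_Y \rho_a(y-G(u)x)\rho_m(x)\,\dee x$ multiplying $\varphi(y)$. Since $\varphi$ was arbitrary, this function is indeed a version of the conditional density of $y$ given $u$.

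I do not anticipate a genuine obstacle: the proof is a routine Fubini/translation argument once the independence structure is written down carefully. One small point worth noting, which is actually an advantage of proceeding via the test-function identity rather than via a convolution formula after dividing by $\det G(u)$, is that the argument requires no invertibility assumption on $G(u)$; the formula is valid for all $u\in X$, which is relevant because in the mixed model we need not insist that $\mathcal{G}(u)$ be bounded away from zero. The only real measurability caveat is that $(u,x,y)\mapsto \rho_a(y-G(u)x)\rho_m(x)$ be jointly measurable, which follows from measurability of $\mathcal{G}$ together with continuity of $\rho_a$ and $\rho_m$.
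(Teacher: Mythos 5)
Your argument is correct and is essentially the paper's own proof: the paper likewise tests against a bounded measurable function and performs the change of variables $(w,z)\mapsto(x,y)=(w,\,G(u)w+z)$, whose unit Jacobian is exactly your ``translation for fixed $x$'' step combined with Fubini. Your remark that no invertibility of $G(u)$ is needed is a valid and worthwhile observation, but it does not alter the route taken.
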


The negative log-likelihood $\Phi:X\times Y\rightarrow\mathbb{R}$ is hence given by
\[
\Phi(u;y) = -\log\left(\int_{Y} \rho_a(y - G(u)x)\rho_m(x)\,\dee x\right).
\]

If the observations are all independent, i.e., $\rho_m$ and $\rho_a$ factor into products over each coordinate, this may be written
\[
\Phi(u;y) = -\sum_{j=1}^J \log\left(\int_{\mathbb{R}} \rho_a^j(y_j - \mathcal{G}_j(u)x_j)\rho_m^j(x_j)\,\dee x_j\right).
\]
We state the assumptions we place on $\rho_m$, $\rho_a$ and $\mathcal{G}(u)$. Note that we no longer require that $\mathcal{G}$ is bounded away from zero.
\begin{assumptions}
\label{assump:mixed}
\begin{enumerate}[(i)]
\item $\mathcal{G}$ is polynomially bounded: there exist $X'\subseteq X$, $C_1 >0$, and $p \geq 1$ such that for all $u \in X'$,
\[
|\mathcal{G}(u)|_Y \leq C_1(1 + \|u\|_X^p).
\]
Furthermore $\mathcal{G}$ is Lipschitz on bounded sets.
\item $\log\rho_a$ and $\nabla\log\rho_a$ are polynomially bounded: there exist $C_2,C_3 > 0$, $q_1,q_2 \geq 1$ such that for all $y \in Y$,
\begin{align*}
|\log\rho_a(y)| &\leq C_2(1 + |y|_Y^{q_1}),\;\;\; |\nabla\log\rho_a(y)|_Y \leq C_3(1 + |y|_Y^{q_2}).
\end{align*}
Furthermore $\rho_a$ is Lipschitz and bounded.
\item $\eta_m$ has moments up to order $q = \max\{q_1,q_2\}$, i.e.
\[
\int_{Y} |x|^q_Y\rho_m(x)\,\dee x < \infty.
\]
\item Let $y \in Y$ and let $A \in \mathbb{R}^{J\times J}$ be a diagonal matrix. Define the probability measure $\nu^{y,A}$ on $Y$ by
\[
\nu^{y,A}(\dee x) \propto \rho_a(y-Ax)\rho_m(x)\,\dee x.
\]
Then, with $q_2$ as above, there exist $r_1, r_2 \geq 0$ such that
\[
\int_Y |x|^{q_2}_Y\,\nu^{y,A}(\dee x) \leq C(1 + |y|^{r_1}_Y + \|A\|_{\mathrm{op}}^{r_2}).
\]
\item $\mu_0$ has all polynomial moments: for all $k \in \mathbb{N}$,
\[
\int_{X'}\|u\|_X^k\,\mu_0(\dee u) < \infty.
\]
\end{enumerate}
\end{assumptions}

\begin{remark}
\cref{assump:mixed}(iv) above is used in the proof of well-posedness of the posterior distribution. It is a complicated condition to check, and may be replaced by the stronger condition that $\rho_m$ is compactly supported. Then we have that
\begin{align*}
\int_Y |x|^{q_2}_Y\,\nu^{y,A}(\dee x) &= \frac{\int_Y |x|^{q_2}_Y \rho_a(y-Ax)\rho_m(x)\,\dee x}{\int_Y \rho_a(y-Ax)\rho_m(x)\,\dee x} \leq \max_{x \in \supp(\rho_m)}|x|^{q_2}_Y
\end{align*}
providing a uniform bound. This is a strong and far from necessary condition however, and rules out the useful case where $\eta_m$ is Gaussian.
\end{remark}

We give the analogous result to \cref{prop:wpmult} for the mixed noise case.

\begin{proposition}
\label{prop:wpmixed}
Let \cref{assump:mixed} hold. Then $\Phi:X'\times Y \rightarrow \mathbb{R}$ satisfies
\begin{enumerate}[(i)]
\item There exists a function $M_0:\mathbb{R}^+\times\mathbb{R}^+\rightarrow\mathbb{R}^+$ monotonic non-decreasing separately in each argument such that for all $u \in X'$ and $y \in Y$,
\[
\Phi(u;y) \leq M_0(|y|_Y,\|u\|_X).
\]
\item There exist functions $M_i:\mathbb{R}^+\times\mathbb{R}^+\rightarrow\mathbb{R}^+$, $i=1,2$ monotonic non-decreasing separately in each argument, and with $M_2$ strictly positive, such that for all $u \in X'$, $y,y_1,y_2 \in B_Y(0,r)$,
\begin{align*}
\Phi(u;y) &\geq -M_1(r,\|u\|_X),\\
|\Phi(u;y_1)-\Phi(u;y_2)| &\leq M_2(r,\|u\|_X)|y_1-y_2|_Y.
\end{align*}
\item For each $y \in Y$ and $u_1,u_2 \in X'$ with $\|u_1\|_X,\|u_2\|_X < r$, there exists $M_3:\mathbb{R}^+\times\mathbb{R}^+\rightarrow\mathbb{R}^+$ such that
\[
|\Phi(u_1;y) - \Phi(u_2;y)| \leq M_3(|y|_Y,r)\|u_1-u_2\|_X.
\]
\end{enumerate}
\end{proposition}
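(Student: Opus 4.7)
My plan is to control all three properties through the partition function
\[
Z(u,y) := \int_Y \rho_a(y - G(u)x)\,\rho_m(x)\,\dee x, \qquad \Phi(u;y) = -\log Z(u,y).
\]
Each claim reduces to bounding $Z$ below (for the upper bound on $\Phi$), bounding $Z$ above (for the lower bound on $\Phi$), or estimating Lipschitz differences of $Z$ in $y$ or in $u$ and combining with the lower bound. Note that only parts (i)--(iii) of \cref{assump:mixed} will be needed here; (iv) and (v) enter the subsequent well-posedness and MAP results.

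For part (i), I would apply Jensen's inequality to $\rho_m$: since $\log$ is concave and $\rho_m$ is a probability density,
\[
\log Z(u,y) \geq \int_Y \log\rho_a(y - G(u)x)\,\rho_m(x)\,\dee x,
\]
hence
\[
\Phi(u;y) \leq \int_Y |\log\rho_a(y - G(u)x)|\,\rho_m(x)\,\dee x \leq C_2\int_Y \bigl(1 + |y - G(u)x|_Y^{q_1}\bigr)\rho_m(x)\,\dee x
\]
by \cref{assump:mixed}(ii). Using $|y - G(u)x|_Y^{q_1} \leq C(|y|_Y^{q_1} + \|G(u)\|_{\mathrm{op}}^{q_1}|x|_Y^{q_1})$, together with $\|G(u)\|_{\mathrm{op}} \leq |\mathcal{G}(u)|_Y \leq C_1(1+\|u\|_X^p)$ from (i) and the finiteness of the $q_1$-moment of $\rho_m$ from (iii), yields a polynomial bound $M_0(|y|_Y,\|u\|_X)$ that is monotonic non-decreasing in each argument.

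For the first half of (ii), I would simply note that $\rho_a$ is bounded and $\rho_m$ integrates to one, so $Z(u,y) \leq \|\rho_a\|_\infty$ and hence $\Phi(u;y) \geq -\log\|\rho_a\|_\infty$, i.e.\ $M_1$ may be taken as a constant. For the Lipschitz property in $y$, I would apply the elementary bound $|\log a - \log b| \leq |a-b|/\min\{a,b\}$: Lipschitz continuity of $\rho_a$ gives $|Z(u,y_1) - Z(u,y_2)| \leq L_{\rho_a}|y_1-y_2|_Y$, while part (i) gives $Z(u,y) \geq \exp(-M_0(r,\|u\|_X))$ when $|y|_Y \leq r$. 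Combining yields $M_2(r,\|u\|_X) := L_{\rho_a}\exp(M_0(r,\|u\|_X))$, which has the required monotonicity and is strictly positive.

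For part (iii) the same logarithmic bound gives
\[
|\Phi(u_1;y) - \Phi(u_2;y)| \leq \frac{|Z(u_1,y) - Z(u_2,y)|}{\min\{Z(u_1,y), Z(u_2,y)\}},
\]
and I would estimate the numerator using Lipschitz continuity of $\rho_a$ and the Lipschitz-on-bounded-sets property of $\mathcal{G}$ from (i):
\[
|\rho_a(y - G(u_1)x) - \rho_a(y - G(u_2)x)| \leq L_{\rho_a}\|G(u_1)-G(u_2)\|_{\mathrm{op}}|x|_Y \leq L_{\rho_a}L(r)\|u_1-u_2\|_X|x|_Y,
\]
then integrate against $\rho_m$, which is finite since $q \geq q_2 \geq 1$ implies $\rho_m$ has a first moment. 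The denominator is again bounded below by $\exp(-M_0(|y|_Y,r))$ via part (i), giving $M_3(|y|_Y,r) := CL_{\rho_a}L(r)\exp(M_0(|y|_Y,r))$. The main conceptual step is (i): unlike the purely multiplicative case where $Z$ is a single evaluation of $\rho_m$, here we must lower-bound an integral of $\rho_a$ against $\rho_m$, and the naive pointwise bound $\rho_a \leq \|\rho_a\|_\infty$ points the wrong way. Jensen's inequality is the key trick that converts the problem into controlling $\int|\log\rho_a|\,\rho_m$, finite by the polynomial growth of $|\log\rho_a|$ and the moment assumption on $\rho_m$; the remaining Lipschitz estimates are then routine.
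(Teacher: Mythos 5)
Your parts (i), the lower bound in (ii), and part (iii) match the paper's proof essentially step for step: Jensen's inequality against the probability measure $\rho_m(x)\,\dee x$ for the upper bound $M_0$, the bound $\rho_a\le\|\rho_a\|_\infty$ giving a constant $M_1$, and the Lipschitz-plus-first-moment estimate (with the $\exp(M_0)$ prefactor) for $M_3$. The one place you genuinely diverge is the Lipschitz-in-$y$ estimate in (ii), and the divergence is consequential. You bound $|\Phi(u;y_1)-\Phi(u;y_2)|$ via $|\log a-\log b|\le|a-b|/\min\{a,b\}$ together with $Z\ge\exp(-M_0)$, which yields $M_2(r,\|u\|_X)=L_{\rho_a}\exp(M_0(r,\|u\|_X))$; since $M_0$ grows like $\|u\|_X^{pq_1}$, this $M_2$ grows \emph{exponentially} in $\|u\|_X$. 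The paper instead differentiates, writing $\nabla_y\Phi(u;y)$ as an average of $\nabla\log\rho_a(y-G(u)x)$ against the reweighted measure $\nu^{y,G(u)}(\dee x)\propto\rho_a(y-G(u)x)\rho_m(x)\,\dee x$, and then invokes \cref{assump:mixed}(iv) to bound the $q_2$-moment of that measure polynomially in $|y|_Y$ and $\|G(u)\|_{\mathrm{op}}$; this produces $M_2(r,\|u\|_X)=C(1+r^{s_1}+\|u\|_X^{s_2})$, polynomial in $\|u\|_X$. So your remark that \cref{assump:mixed}(iv) is not needed in this proposition is exactly backwards relative to the paper: (iv) exists precisely to make this one step deliver a polynomial $M_2$.

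Both arguments prove the proposition as literally stated, since the statement only asks that $M_2$ be monotone and strictly positive. But the subsequent well-posedness theorem needs $\exp(M_1(r,\|u\|_X))\left(1+M_2(r,\|u\|_X)^2\right)\in L^1_{\mu_0}$, and its proof justifies this by citing that $M_1$ is constant and $M_2$ is \emph{polynomially} bounded in $u$, so that \cref{assump:mixed}(v) (all polynomial moments of $\mu_0$) suffices. With your exponential $M_2$ this integrability fails in general: polynomial moments do not control $\int\exp(C\|u\|_X^{pq_1})\,\mu_0(\dee u)$, and even for a Gaussian prior Fernique's theorem only handles $\exp(\alpha\|u\|_X^2)$ for small $\alpha$ (note $pq_1\ge 2$ already in the Gaussian-noise case). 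So if you keep your route for (ii) you must either strengthen the hypotheses on $\mu_0$ or lose the Hellinger continuity result downstream; I would adopt the paper's gradient argument for that single step and keep the rest of your proof as is.
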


\begin{proof}
\begin{enumerate}[(i)]
\item Let $u \in X'$ and $y \in Y$. Using Jensen's inequality with the convex function $-\log$ and the probability measure $\rho_m(x)\,\dee x$, we have
\begin{align*}
\Phi(u;y) &= -\log\left(\int_Y \rho_a(y - G(u)x)\rho_m(x)\,\dee x\right)\\
&\leq \int_Y -\log(\rho_a(y - G(u)x))\rho_m(x)\,\dee x.
\end{align*}
Hence using the assumptions on polynomial growth and existence of moments,
\begin{align*}
\Phi(u;y) &\leq \int_{Y} C(1 + |y - G(u)x|_Y^{q_1})\rho_m(x)\,\dee x\\
&\leq \int_{Y} C(1 + |y|_Y^{q_1} + (1+\|u\|_X^{pq_1})|x|^{q_1}_Y)\rho_m(x)\,\dee x\\
&\leq C(1 + |y|_Y^{q_1} + \|u\|_X^{pq_1})\\
&=: M_0(|y|_Y, \|u\|_X).
\end{align*}

\item Let $u \in X'$ and let $y \in Y$ with $|y|_Y < r$. By the boundedness of $\rho_a$, we have
\[
\rho(y|u) = \int_Y \rho_a(y - G(u)x)\rho_m(x)\,\dee x \leq C\int_Y\rho_m(x)\,\dee x = C
\]
and so
\[
\Phi(u;y) = -\log\rho(y|u) \geq -\log C =: M_1(r,\|u\|_X).
\]
For the Lipschitz property, let $y_1, y_2 \in Y$ with $|y_1|_Y,|y_2|_Y < r$, and let $u \in X$. We may differentiate $\Phi(u;\cdot)$ to see that
\begin{align*}
|\nabla_y \Phi(u;y)|_Y &= |\nabla_y \log\rho(y|u)|_Y\\
&\leq \frac{1}{\rho(y|u)}\int_Y |\nabla\rho_a(y-G(u)x)|_Y\rho_m(x)\,\dee x\\
&= \frac{1}{\rho(y|u)}\int_Y |\nabla\log\rho_a(y-G(u)x)|_Y\rho_a(y-G(u)x)\rho_m(x)\,\dee x\\
&\leq \frac{1}{\rho(y|u)}\int_Y C(1+|y|_Y^{q_2} + (1+\|u\|_X^{pq_2})|x|^{q_2}_Y)\rho_a(y-G(u)x)\rho_m(x)\,\dee x\\
&\leq C(1+|y|_Y^{q_2}) + C(1+\|u\|_X^{pq_2})\frac{\int_Y |x|^{q_2}_Y\rho_a(y-G(u)x)\rho_m(x)\,\dee x}{\int_Y \rho_a(y-G(u)x)\rho_m(x)\,\dee x}\\
&\leq C(1+|y|_Y^{s_1} + \|u\|_X^{s_2})
\end{align*}
for some $s_1,s_2 \geq 0$, where we have used the polynomial bounds on $\nabla\log\rho_a$ and $\mathcal{G}$, as well as \cref{assump:mixed}(iv). Hence for $u \in X'$ and $y_1,y_2 \in Y$ with $|y_1|_Y,|y_2|_Y < r$,
\begin{align*}
|\Phi(u;y_1)-\Phi(u;y_2)| &\leq \sup_{|y|_Y<r}|\nabla_y\Phi(u;y)|_Y|y_1-y_2|_Y\\
&\leq C(1+r^{s_1} + \|u\|_X^{s_2})|y_1-y_2|_Y\\
&=: M_2(r,\|u\|_X)|y_1-y_2|_Y.
\end{align*}

\item Let $u_1, u_2 \in X$ with $\|u_1\|_X, \|u_2\|_X < r$, and let $y \in Y$. Then similarly to the purely multiplicative case, using the Lipschitz property of $\rho_a$ and $\mathcal{G}$,
\begin{align*}
|\Phi(u_1;y) - \Phi(u_2;y)| &\leq \exp(M_0(|y|_Y,r))\int_{Y}C|(G(u_1) - G(u_2))x|_Y\rho_m(x)\,\dee x\\
&\leq C \exp(M_0(|y|_Y,r))\int_{Y}L(r)\|u_1-u_2\|_X|x|_Y\rho_m(x)\,\dee x\\
&=: M_3(|y|_Y,r)\|u_1-u_2\|_X.
\end{align*}
\end{enumerate}
\end{proof}

The case when both the multiplicative and additive noise are Gaussian is covered by the above set-up: 
\begin{proposition}
\label{prop:gauss_assump}
Let $\Gamma^a, \Gamma^m \in \mathbb{R}^{J\times J}$ be positive definite matrices, and suppose that $\eta_a \sim N(0,\Gamma^a)$ and $\eta_m \sim N(\mathbf{1},\Gamma^m)$ are both Gaussian. Then \cref{assump:mixed}(ii)-(iv) are satisfied.
\end{proposition}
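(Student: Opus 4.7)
The plan is to verify \cref{assump:mixed}(ii), (iii), and (iv) in turn; only (iv) requires any real work. For (ii), I would simply write out the Gaussian density $\rho_a(y) \propto \exp(-\frac{1}{2}\langle y, (\Gamma^a)^{-1}y\rangle)$: its logarithm is a negative-definite quadratic in $y$, giving the polynomial bound with $q_1 = 2$; its gradient $\nabla\log\rho_a(y) = -(\Gamma^a)^{-1}y$ is linear, giving $q_2 = 1$. Boundedness of $\rho_a$ is immediate, and Lipschitz continuity follows because $\nabla\rho_a(y) = -(\Gamma^a)^{-1}y\,\rho_a(y)$ is uniformly bounded on $Y$. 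Condition (iii) is trivial, since $\eta_m \sim N(\mathbf{1},\Gamma^m)$ has all polynomial moments, including those of order $q = \max\{q_1,q_2\} = 2$.

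The substance lies in (iv), and the key idea is Gaussian conjugacy: viewed as a function of $x$, the product $\rho_a(y-Ax)\rho_m(x)$ is a product of two Gaussian densities and hence proportional to a single Gaussian density. I would complete the square in the exponent to read off
\[
\nu^{y,A} = N(m,\Sigma), \qquad \Sigma = \bigl(A^T(\Gamma^a)^{-1}A + (\Gamma^m)^{-1}\bigr)^{-1}, \qquad m = \Sigma\bigl(A^T(\Gamma^a)^{-1}y + (\Gamma^m)^{-1}\mathbf{1}\bigr).
\]
The crucial observation is that $\Sigma^{-1} \succeq (\Gamma^m)^{-1}$, so $\Sigma \preceq \Gamma^m$; this yields $\tr(\Sigma) \leq \tr(\Gamma^m)$ and $\|\Sigma\|_{\mathrm{op}} \leq \|\Gamma^m\|_{\mathrm{op}}$ uniformly in both $A$ and $y$. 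A direct operator-norm estimate on the expression for $m$ then gives $|m|_Y \leq C(1 + \|A\|_{\mathrm{op}}|y|_Y)$.

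To close out (iv), since $q_2 = 1$, I would use Cauchy--Schwarz to bound $\int_Y |x|_Y\,\nu^{y,A}(\dee x) \leq (|m|_Y^2 + \tr(\Sigma))^{1/2}$, then apply Young's inequality $\|A\|_{\mathrm{op}}|y|_Y \leq \frac{1}{2}(\|A\|_{\mathrm{op}}^2 + |y|_Y^2)$ to split the cross term, producing the required bound with $r_1 = r_2 = 2$. The most delicate step is the completing-the-square identification of $m$ and $\Sigma$; once those are in hand, everything else is a standard operator-norm computation, and I do not anticipate any genuine obstacle.
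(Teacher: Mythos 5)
Your proof is correct, and for part (iv) it takes a genuinely different route from the paper's. Both arguments start the same way: complete the square to identify $\nu^{y,A} = N(m,\Sigma)$ with $\Sigma = \bigl((\Gamma^m)^{-1} + A(\Gamma^a)^{-1}A\bigr)^{-1}$ and $m = \Sigma\bigl(A(\Gamma^a)^{-1}y + (\Gamma^m)^{-1}\mathbf{1}\bigr)$. From there the paper exploits the diagonality of $A$: it applies the Sherman--Woodbury formula to get the explicit componentwise bound $\Sigma_{ii} \leq \Gamma^m_{ii}$, and then bounds each marginal first absolute moment $\mathbb{E}|X_i|$ using the exact formula in terms of the Kummer confluent hypergeometric function ${}_1F_1(-\tfrac12,\tfrac12,\cdot)$, estimated by a power-series comparison; this yields exponents $r_1 = r_2 = 4$. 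You instead use the operator inequality $\Sigma^{-1} \succeq (\Gamma^m)^{-1}$, hence $\Sigma \preceq \Gamma^m$ by operator monotonicity of the inverse, which gives $\tr(\Sigma) \leq \tr(\Gamma^m)$ and $\|\Sigma\|_{\mathrm{op}} \leq \|\Gamma^m\|_{\mathrm{op}}$ uniformly, and then bound the first moment by the second via Cauchy--Schwarz, $\int_Y |x|_Y\,\nu^{y,A}(\dee x) \leq (|m|_Y^2 + \tr(\Sigma))^{1/2}$. Your route avoids both the componentwise Sherman--Woodbury manipulation and the special-function estimate, does not actually need $A$ to be diagonal, and delivers the sharper exponents $r_1 = r_2 = 2$; since \cref{assump:mixed}(iv) only requires the existence of some $r_1, r_2 \geq 0$, either conclusion suffices. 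Parts (ii) and (iii) are handled essentially identically in both arguments.
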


The proof of this proposition is given in the appendix, however the lengthy calculation in the proof to show that \cref{assump:mixed}(iv) holds can actually be avoided by noting that the likelihood $\rho(y|u)$ is again Gaussian, from which the polynomial bound on the Lipschitz constant $M_2$ can be seen directly. Moreover, this allows us to relax the assumption on $\Gamma^m$ from positive definite to positive semi-definite.

\begin{theorem}[Existence]
Let \cref{assump:mixed} hold. Assume that $\mu_0(X') = 1$ and that $\mu_0(X\cap B) > 0$ for some bounded set $B$ in $X$. Then for every $y \in Y$, the conclusions of \cref{thm:exist_mult} hold.
\end{theorem}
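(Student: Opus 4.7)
The plan is to mirror the proof of \cref{thm:exist_mult}, simply substituting \cref{prop:wpmixed} for \cref{prop:wpmult} in the verification of the hypotheses of Theorem~4.3 of \cite{DS17}. First I would note that \cref{prop:wpmixed}(iii) yields continuity of $\Phi(\cdot;y)$ on $X'$ for each $y$, and \cref{prop:wpmixed}(ii) yields continuity of $\Phi(u;\cdot)$ on $Y$ for each $u$; combined with $\mu_0(X')=1$ this gives joint $\nu_0$-measurability of $\Phi$, the first hypothesis of Bayes' theorem.

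Next I would check the integrability condition, namely that $\exp(M_1(r,\|u\|_X)) \in L^1_{\mu_0}(X;\R)$ for every $r>0$. Inspecting the proof of \cref{prop:wpmixed}(ii), the lower bound arises from the uniform bound $\rho(y|u) \le C$ obtained from boundedness of $\rho_a$ together with $\int \rho_m = 1$, and is thus independent of both $r$ and $\|u\|_X$. Hence $M_1$ is a constant and the integrability holds trivially.

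The only remaining task is to show positivity of the normalisation constant
\[
Z(y) = \int_X \exp(-\Phi(u;y))\,\mu_0(\dee u).
\]
Here I would fix $y\in Y$ and use the bounded set $B\subseteq X$ on which $\mu_0(X\cap B)>0$. Choose $R>0$ with $B\subseteq B_X(0,R)$, and apply \cref{prop:wpmixed}(i) to obtain $\Phi(u;y)\le M_0(|y|_Y,R)$ for all $u\in X'\cap B$. This gives the pointwise bound $\exp(-\Phi(u;y))\ge \exp(-M_0(|y|_Y,R))$ on $X'\cap B$, and since $\mu_0(X')=1$ implies $\mu_0(X'\cap B) = \mu_0(X\cap B) > 0$, we conclude
\[
Z(y)\ge \exp(-M_0(|y|_Y,R))\,\mu_0(X\cap B) > 0.
\]
The conclusion then follows from Theorem~4.3 of \cite{DS17}.

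None of these steps presents a real obstacle; everything needed has been encoded in \cref{prop:wpmixed}. The only point worth emphasising, in contrast to \cref{thm:exist_mult}, is that because additive noise prevents $\rho(y|u)$ from collapsing near zero, the upper bound on $\Phi$ holds on all of $Y$ rather than only on the interior of $\supp(\rho_m)$, so existence of the posterior is obtained for every $y\in Y$ without an open-set restriction on the data.
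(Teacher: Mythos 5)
Your proposal is correct and follows essentially the same route as the paper: both reduce the statement to Theorem~4.3 of \cite{DS17} via \cref{prop:wpmixed} and observe that the integrability condition $\exp(M_1(r,\|u\|_X)) \in L^1_{\mu_0}(X;\mathbb{R})$ holds trivially because the boundedness of $\rho_a$ makes $M_1$ constant. The extra details you supply (measurability, positivity of $Z(y)$ via the bounded set $B$, and the observation that no restriction to $Y'$ is needed) are all consistent with the paper's argument, which simply states that the proof is identical to the multiplicative case.
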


\begin{proof}
The proof is identical to the multiplicative case, since again we have that $M_1$ is constant.
\end{proof}

\begin{theorem}[Well-posedness]
Let \cref{assump:mixed} hold. Assume that $\mu_0(X') = 1$ and that $\mu_0(X\cap B) > 0$ for some bounded set $B$ in $X$. Then there is a $C = C(r) > 0$ such that, for all $y,y' \in B_Y(0,r)$,
\[
d_{\mathrm{Hell}}(\mu^{y},\mu^{y'}) \leq C|y-y'|_Y.
\]
\end{theorem}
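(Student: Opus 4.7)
The plan is to mirror exactly the argument used for the well-posedness theorem in the purely multiplicative case, simply swapping in the bounds provided by \cref{prop:wpmixed} in place of those from \cref{prop:wpmult}. I would appeal to Theorem 4.5 of \cite{DS17}, which yields the stated local Lipschitz bound in Hellinger distance provided one can verify the integrability condition
\[
\exp\!\big(M_1(r,\|u\|_X)\big)\bigl(1 + M_2(r,\|u\|_X)^2\bigr) \in L^1_{\mu_0}(X;\mathbb{R})
\]
for every $r > 0$, together with the lower bound on the normalization constant $Z(y)$ which is already guaranteed by the existence theorem just proved.

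The first step is to unpack the conclusions of \cref{prop:wpmixed}: the lower bound $M_1(r,\|u\|_X) = -\log C$ is in fact a constant independent of both arguments, so $\exp(M_1)$ contributes only a harmless multiplicative constant to the integrand. The upper bound on the $y$-Lipschitz constant is $M_2(r,\|u\|_X) = C(1 + r^{s_1} + \|u\|_X^{s_2})$, which is polynomially bounded in $\|u\|_X$ with $r$ fixed, and hence $M_2^2$ is itself polynomially bounded in $\|u\|_X$.

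The second step is to invoke \cref{assump:mixed}(v), which guarantees that $\mu_0$ has all polynomial moments on $X'$; combined with $\mu_0(X') = 1$, this immediately yields
\[
\int_X \exp\!\big(M_1(r,\|u\|_X)\big)\bigl(1 + M_2(r,\|u\|_X)^2\bigr)\,\mu_0(\dee u) < \infty,
\]
so the hypothesis of Theorem 4.5 in \cite{DS17} is satisfied and the Hellinger bound follows.

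I do not anticipate any real obstacle: all the technical work has already been carried out in the proof of \cref{prop:wpmixed}, and in particular in verifying the non-trivial condition \cref{assump:mixed}(iv) which was needed to make $M_2$ only polynomially large in $\|u\|_X$ (rather than, say, exponentially large). If anywhere one might trip, it is in checking that the constant $C(r)$ produced by Theorem 4.5 of \cite{DS17} depends only on $r$ and not on $y,y'$ individually — but this follows from the monotonicity of $M_0,M_1,M_2$ in their first arguments, which is part of the statement of \cref{prop:wpmixed}.
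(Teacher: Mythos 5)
Your argument is correct and follows essentially the same route as the paper: both invoke Theorem 4.5 of \cite{DS17}, verify the integrability condition $\exp(M_1)(1+M_2^2)\in L^1_{\mu_0}$ using the fact that \cref{prop:wpmixed} gives a constant $M_1$ and a polynomially bounded $M_2$, and close with the polynomial-moment assumption on $\mu_0$. Your remarks about the role of \cref{assump:mixed}(iv) and the $r$-dependence of the constant are consistent with the paper's (terser) proof.
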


\begin{proof}
This is again identical to the multiplicative case since the proof of \cref{prop:wpmixed} shows that $M_1$ is constant and $M_2$ is polynomially bounded in $u$.
\end{proof}

As before, in the case that the prior $\mu_0$ is Gaussian, we can use the result of \cref{prop:wpmixed} to prove existence of and characterize the modes of the posterior distribution. 

\begin{theorem}[Existence of MAP estimators]
\label{thm:map_mixed}
Let $\mu_0$ be a Gaussian measure, and let \cref{assump:mixed} hold. Let $(E,\|\cdot\|_E)$ denote the Cameron-Martin space associated with the prior measure $\mu_0$. Let $y \in Y$ and define the functional $I:X\rightarrow\mathbb{R}$ by \cref{eq:om}. Then minimizers of $I$ exist in $E$. Moreover, any minimizer of $I$ is a MAP estimator of the posterior measure $\mu^y$, and any MAP estimator of $\mu^y$ minimizes $I$.
\end{theorem}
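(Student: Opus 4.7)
The plan is to mirror the proof of \cref{thm:map_mult} and reduce the result to Corollary 3.10 of \cite{DLSV13}. The only ingredient specific to the Gaussian prior setting itself is that Fernique's theorem automatically yields \cref{assump:mixed}(v) (all polynomial moments of $\|u\|_X$), so that assumption may be taken as free here. Everything else is about the structure of $\Phi$, which has already been extracted in \cref{prop:wpmixed}.

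To apply Corollary 3.10 of \cite{DLSV13}, I need to verify three properties of $\Phi(\cdot;y)$ for fixed $y \in Y$: a uniform lower bound, boundedness on norm-bounded subsets of $X'$, and local Lipschitz continuity in $u$. These correspond exactly to the three parts of \cref{prop:wpmixed}. Part (ii) gives $\Phi(u;y) \geq -M_1(r,\|u\|_X)$ on $B_Y(0,r)$, and the proof of that proposition in fact shows that $M_1$ is a constant (it came from the uniform bound $\rho(y|u) \leq C$ obtained by boundedness of $\rho_a$), so the lower bound is uniform in $u$ as required. Part (i) gives the polynomial upper bound $\Phi(u;y) \leq M_0(|y|_Y,\|u\|_X)$, which immediately implies boundedness of $\Phi(\cdot;y)$ on bounded subsets of $X'$. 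Part (iii) gives $|\Phi(u_1;y)-\Phi(u_2;y)| \leq M_3(|y|_Y,r)\|u_1-u_2\|_X$ for $\|u_1\|_X,\|u_2\|_X<r$, which is precisely local Lipschitz continuity.

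With these three ingredients, Corollary 3.10 of \cite{DLSV13} yields both halves of the conclusion: the functional $I$ defined in \cref{eq:om} admits at least one minimizer in $E$, and the set of its minimizers coincides with the set of MAP estimators of $\mu^y$ in the strong-mode sense.

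I expect no substantive obstacle: the work has already been done in \cref{prop:wpmixed}, and the mixed noise case is actually slightly cleaner than the purely multiplicative case because $\Phi$ is defined on all of $X' \times Y$ rather than only on $X' \times Y'$, so no care is needed to restrict the data to the interior of the support of a noise density. The one thing worth explicitly pointing out is that one is applying the cited corollary with the bounded-below hypothesis supplied by the constant $M_1$ rather than a genuinely $u$-dependent bound, which is exactly the regime those results were designed for.
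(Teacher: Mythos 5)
Your proposal matches the paper's own argument exactly: the paper proves \cref{thm:map_mixed} by noting it is the same as the proof of \cref{thm:map_mult} with \cref{prop:wpmixed} in place of \cref{prop:wpmult}, i.e.\ Fernique's theorem supplies the moment condition and Corollary 3.10 of \cite{DLSV13} is applied once the uniform lower bound, boundedness on bounded sets, and local Lipschitz continuity of $\Phi(\cdot;y)$ are read off from the proposition. No gaps; your verification of the three hypotheses against parts (ii), (i), (iii) is precisely what the paper relies on.
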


\begin{proof}
The proof is the same as the multiplicative case, with \cref{prop:wpmixed} in place of \cref{prop:wpmult}.
\end{proof}

\begin{example}[Gaussian noise]
\label{ex:gaussian}
Let $\Gamma^a, \Gamma^m \in \mathbb{R}^{J\times J}$ be positive definite matrices, and suppose that $\eta_a \sim N(0,\Gamma^a)$ and $\eta_m \sim N(\mathbf{1},\Gamma^m)$ are both Gaussian. It was shown earlier that \cref{assump:mixed}(ii)-(iv) are satisfied by these densities.

Assuming $u$ is independent of the noise, it can be checked that $y|u$ is also Gaussian, with mean $\mathcal{G}(u)$ and covariance matrix
\[
\Gamma(u) := \Gamma^a + \mathcal{G}(u)\mathcal{G}(u)^*\circ\Gamma^m,
\]
where $\circ$ denotes the Hadamard (componentwise) product of matrices, i.e.
\[
\Gamma(u)_{ij} = \Gamma^a_{ij} + \mathcal{G}_i(u)\mathcal{G}_j(u)\Gamma^m_{ij},\;\;\;i,j=1,\ldots,J.
\]
We can then see directly, without performing any integration, that the negative log-likelihood $\Phi$ is given by
\[
\Phi(u;y) = \frac{1}{2}|\mathcal{G}(u)-y|_{\Gamma(u)}^2 + \frac{1}{2}\log\det\Gamma(u).
\]
If we take the observations to be independent, the covariance matrices are diagonal and can be written in the form
\[
\Gamma^a = \mathrm{diag}(\gamma_{a,j}^2),\;\;\;\Gamma^m = \mathrm{diag}(\gamma_{m,j}^2)
\]
so that
\[
\Gamma(u) = \mathrm{diag}(\gamma_{a,j}^2 + \gamma_{m,j}^2\mathcal{G}_j(u)^2).
\]
Then the potential $\Phi$ is given by
\[
\Phi(u;y) = \frac{1}{2}\sum_{j=1}^J \frac{|\mathcal{G}_j(u) - y_j|^2}{\gamma_{a,j}^2 + \gamma_{m,j}^2\mathcal{G}_j(u)^2} + \frac{1}{2}\sum_{j=1}^J\log(\gamma_{a,j}^2 + \gamma_{m,j}^2\mathcal{G}_j(u)^2).
\]
Suppose now that, as in \cref{ex:mult}, $\G(u)$ is a function on domain $D\subseteq\R^d$, with $\{\G_j(u)\}$ being a sequence of equidistributed point evaluations of $\G(u)$ and $\{y_j\}$ the corresponding observations. Scaling the variances $\gamma_{a,j}^2 \equiv J\gamma_{a}^2$, $\gamma_{m,j}^2 \equiv J\gamma_{m}^2$, we see that (up to constants)
\[
\Phi(u;y) \to \frac{1}{2}\int_D \left(\frac{|\G(u)-y|^2}{\gamma_a^2 + \gamma_m^2\G(u)^2} + \log(\gamma_a^2 + \gamma_m^2\G(u)^2)\right)
\]
as $J\to\infty$. The denominator here could be viewed as a type of self-normalization of the data. This has a similar form to the misfit that appears in some variational approaches to inverse problems wherein the $L^2$ misfit is normalized by the size of the observed data --  the $\G(u)$ in the denominator is replaced by $y$ \cite{isaac2015scalable}. The logarithmic term is typically not present when this is the case. This could be viewed as approximating the mixed noise model by an additive noise model, allowing for easier optimization; however the corresponding data model would be ill-posed in the sense that the variance of the additive noise depends on its own realization.
\end{example}

\section{Consistency of MAP estimators: mixed Gaussian noise}
The solution to the inverse problem is given in terms of a probability distribution. If the data arises from a true state $u^\dagger \in X$, we would like the posterior to concentrate on this state in some sense, in the limit of small noise or large data. As in \cite{DLSV13}, we can show weak convergence of MAP estimators to states $u^*$ with $\mathcal{G}(u^*) = \mathcal{G}(u^\dagger)$, assuming that $u^\dagger \in E$. If $u^\dagger$ is only in $X$, we get the corresponding weaker result.

We concentrate on the case where the observations are corrupted by both multiplicative and additive Gaussian noise, introduced in \cref{ex:gaussian}. We take a Gaussian prior $\mu_0$, with Cameron-Martin space $(E,\langle\cdot,\cdot\rangle_E)$. We provide full details for the small noise limit, and some motivating arguments for the large data limit. As in the previous section, throughout this section $C$ denotes a generic constant.

\subsection{Small noise limit}

First assume that the true state $u^\dagger \in E$ . Suppose we have a sequence of independent observations $(y_n)_{n\in\mathbb{N}}$ in which the variances of the additive and multiplicative noise diminish at the same rate:
\begin{align*}
y_n = \left(\mathbf{1}+\frac{1}{n}\eta^m_n\right)\mathcal{G}(u^\dagger) + \frac{1}{n}\eta^a_n
\end{align*}
where $\eta^m_n \sim N(0,\Gamma^m)$ and $\eta^a_n \sim N(0,\Gamma^a)$ are independent realisations of the noise. As in \cref{ex:gaussian}, we assume $\Gamma^a$ and $\Gamma^m$ are positive definite.

The conditional distribution of $y_n|u$ is $N(\mathcal{G}(u),\Gamma_n(u))$, where $\Gamma_n(u) = \Gamma(u)/n^2$ and $\Gamma(u) = \Gamma^a + \mathcal{G}(u)\mathcal{G}(u)^*\circ\Gamma^m$ is as defined in \cref{ex:gaussian}. The posterior distribution is then given by
\[
\frac{\dee \mu^{y_n}}{\dee \mu_0}(u) \propto \exp\left(-\frac{n^2}{2}|\mathcal{G}(u)-y_n|^2_{\Gamma(u)} - \frac{1}{2}\log\det\Gamma(u)\right)
\]
and so the negative log-likelihood is
\[
\Phi_n(u;y) = \frac{n^2}{2}|\mathcal{G}(u)-y_n|^2_{\Gamma(u)} + \frac{1}{2}\log\det\Gamma(u).
\]
It can be seen that the conclusion of \cref{thm:map_mixed} holds for each $n$, and so we know that MAP estimators of the posterior $\mu^{y_n}$ exists and are minimizers of the functional
\[
I_n(u) =  \frac{1}{2}\|u\|^2_E + \frac{n^2}{2}|\mathcal{G}(u)-y_n|^2_{\Gamma(u)} + \frac{1}{2}\log\det\Gamma(u).
\]
We wish to study the behaviour of minimizers of these functionals as $n$ gets large. To do so it is instructive (though not necessary) to first look at the limit of the functionals itself. In order to obtain a non-trivial limit we rescale $I_n$ by $2/n^2$, noting that this does not affect the minimizers, i.e.
\[
\underset{u \in E}{\mathrm{argmin}}\,I_n(u) = \underset{u \in E}{\mathrm{argmin}}\,\frac{2}{n^2} I_n(u)\;\;\;\text{ for all }n \in \mathbb{N}.
\]

\begin{proposition}
Define the functionals $J_n:E\rightarrow\mathbb{R}$ by
\[
J_n(u) := \frac{2}{n}I_n(u) = \frac{1}{n^2}\|u\|^2_E + |\mathcal{G}(u)-y_n|^2_{\Gamma(u)} + \frac{1}{n^2}\log\det\Gamma(u).
\]
Then for all $u \in E$, we have $\mathbb{Q}_0$-almost surely,
\begin{align*}
\mathcal{J}(u) & := \lim_{n\rightarrow\infty} J_n(u) = |\mathcal{G}(u) - \mathcal{G}(u^\dagger)|^2_{\Gamma(u)}.
\end{align*}
\end{proposition}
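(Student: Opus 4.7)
The plan is to treat the three summands of $J_n(u)$ separately. Fix $u \in E$. Since $\|u\|_E < \infty$ and $\Gamma(u) = \Gamma^a + \mathcal{G}(u)\mathcal{G}(u)^*\circ\Gamma^m$ is a fixed positive definite matrix (in particular $\log\det\Gamma(u)$ is finite), the first and third terms
\[
\frac{1}{n^2}\|u\|^2_E \quad\text{and}\quad \frac{1}{n^2}\log\det\Gamma(u)
\]
vanish deterministically as $n \to \infty$. Thus the whole content of the statement is the convergence of the middle term $|\mathcal{G}(u)-y_n|^2_{\Gamma(u)}$ to $|\mathcal{G}(u)-\mathcal{G}(u^\dagger)|^2_{\Gamma(u)}$.

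The main step is to show that $y_n \to \mathcal{G}(u^\dagger)$ almost surely. Writing
\[
y_n - \mathcal{G}(u^\dagger) = \frac{1}{n}\bigl(\eta^m_n\,\mathcal{G}(u^\dagger) + \eta^a_n\bigr),
\]
it suffices to check that $\frac{1}{n}\eta^m_n \to 0$ and $\frac{1}{n}\eta^a_n \to 0$ almost surely. Since $\eta^m_n$ is i.i.d. $N(0,\Gamma^m)$ in the finite dimensional space $\mathbb{R}^J$, Gaussian tail bounds give $\mathbb{P}(|\eta^m_n|_Y > n^{1/4}) \leq C\exp(-cn^{1/2})$, which is summable, so by Borel--Cantelli one has $|\eta^m_n|_Y \leq n^{1/4}$ eventually almost surely; hence $n^{-1}\eta^m_n \to 0$ a.s. The same argument handles $\eta^a_n$.

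Given this, the final step is to pass to the limit in $|\mathcal{G}(u)-y_n|^2_{\Gamma(u)}$. Because $\Gamma(u) \succeq \Gamma^a \succ 0$, the norm $|\cdot|_{\Gamma(u)}$ is equivalent to the Euclidean norm on the finite-dimensional space $\mathbb{R}^J$, and in particular continuous in its argument. So $y_n \to \mathcal{G}(u^\dagger)$ a.s.\ implies
\[
|\mathcal{G}(u)-y_n|^2_{\Gamma(u)} \longrightarrow |\mathcal{G}(u)-\mathcal{G}(u^\dagger)|^2_{\Gamma(u)} \quad \mathbb{Q}_0\text{-a.s.}
\]
Combining the three limits yields the claim. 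The only subtle point is the a.s.\ convergence of the noise sequences; everything else is a matter of recognising which quantities are fixed in $n$ versus scaling with $n$.
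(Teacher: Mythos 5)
Your proposal is correct and follows essentially the same route as the paper: both arguments reduce the claim to showing that the $\tfrac{1}{n}$-scaled i.i.d.\ Gaussian noise vanishes almost surely via a Borel--Cantelli argument (you use a Gaussian tail bound where the paper uses Chebyshev's inequality, and you conclude via continuity of the fixed norm $|\cdot|_{\Gamma(u)}$ where the paper expands the square and kills the cross and quadratic noise terms directly). The differences are cosmetic; your version is, if anything, slightly cleaner.
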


\begin{proof}
Since $u \in E$ is fixed, the first and final terms go to zero. We insert the expression for the data $y_n$ into the middle term and expand:
\begin{align}
\notag|\mathcal{G}(u)-y_n|^2_{\Gamma(u)} &= \left|\mathcal{G}(u) - \mathcal{G}(u^\dagger) - \frac{1}{n}\eta^m_n\mathcal{G}(u^\dagger) -\frac{1}{n}\eta^a_n\right|_{\Gamma(u)}^2\\
\label{eq:misfit1}&= |\mathcal{G}(u) - \mathcal{G}(u^\dagger)|_{\Gamma(u)}^2 + |X_n|_Y^2 - 2\langle \Gamma(u)^{-1/2}(\mathcal{G}(u) - \mathcal{G}(u^\dagger)),X_n\rangle_Y
\end{align}
where
\begin{align*}
X_n &= \Gamma(u)^{-1/2}\left(\frac{1}{n}\eta^m_n\mathcal{G}(u^\dagger) + \frac{1}{n}\eta^a_n\right) \sim N\left(0,\frac{1}{n^2}\Gamma(u)^{-1/2}\Gamma(u^\dagger)\Gamma(u)^{-1/2}\right)
\end{align*}
is a centred Gaussian random variable. Chebyshev's inequality and an application of the first Borel-Cantelli lemma then gives that the final two terms in \cref{eq:misfit1} tend to zero $\mathbb{Q}_0$-almost surely.
\end{proof}

\begin{remark}
It is clear that this limiting functional is minimized by $u \in \mathbb{E}$ if and only if $\mathcal{G}(u) = \mathcal{G}(u^\dagger)$.
\end{remark}

We now show that, along a subsequence, minimizers of $J_n$ converge to those of $\mathcal{J}$ almost surely. Before we proceed we will need the following lemma regarding eigenvalues of $\Gamma(u)$.

\begin{lemma}
\label{lem:eigbound}
Given $u \in X$, let $\lambda_{\min}(u)$ and $\lambda_{\max}(u)$ denote the smallest and largest eigenvalues of $\Gamma(u)$ respectively. Then there exist $\lambda_{\min}^* > 0$ and $C > 0$ such that for all $u \in X$,
\[
\lambda_{\min}^* \leq \lambda_{\min}(u) \leq \lambda_{\max}(u) \leq C(1+|\mathcal{G}(u)|_Y^2).
\]
\end{lemma}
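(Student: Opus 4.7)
The plan is to exploit the explicit formula $\Gamma(u) = \Gamma^a + \mathcal{G}(u)\mathcal{G}(u)^*\circ\Gamma^m$ from \cref{ex:gaussian}, using positive definiteness of $\Gamma^a$ for the lower bound and a rewriting of the Hadamard product for the upper bound.

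For the lower bound $\lambda_{\min}(u)\ge\lambda^*_{\min}>0$, I would argue that $\mathcal{G}(u)\mathcal{G}(u)^*$ is rank-one positive semidefinite and $\Gamma^m$ is positive (semi)definite, so by the Schur product theorem their Hadamard product is positive semidefinite. Hence $\Gamma(u)\succeq\Gamma^a$ in the Loewner order, which yields $\lambda_{\min}(u)\ge\lambda_{\min}(\Gamma^a)=:\lambda^*_{\min}>0$, uniformly in $u$. The middle inequality $\lambda_{\min}(u)\le\lambda_{\max}(u)$ is automatic.

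For the upper bound, the key observation is the identity
\[
\mathcal{G}(u)\mathcal{G}(u)^*\circ\Gamma^m \;=\; D_{\mathcal{G}(u)}\,\Gamma^m\,D_{\mathcal{G}(u)},
\]
where $D_{\mathcal{G}(u)}=\mathrm{diag}(\mathcal{G}_j(u))$; one checks this entrywise since $(D_{\mathcal{G}(u)}\Gamma^m D_{\mathcal{G}(u)})_{ij}=\mathcal{G}_i(u)\Gamma^m_{ij}\mathcal{G}_j(u)$. Taking operator norms and using submultiplicativity gives
\[
\|\mathcal{G}(u)\mathcal{G}(u)^*\circ\Gamma^m\|_{\mathrm{op}}\le\|D_{\mathcal{G}(u)}\|_{\mathrm{op}}^2\|\Gamma^m\|_{\mathrm{op}}=\Big(\max_j|\mathcal{G}_j(u)|\Big)^2\|\Gamma^m\|_{\mathrm{op}}\le|\mathcal{G}(u)|_Y^2\|\Gamma^m\|_{\mathrm{op}}.
\]
Combining with $\|\Gamma^a\|_{\mathrm{op}}<\infty$ and the triangle inequality yields
\[
\lambda_{\max}(u)=\|\Gamma(u)\|_{\mathrm{op}}\le\|\Gamma^a\|_{\mathrm{op}}+\|\Gamma^m\|_{\mathrm{op}}|\mathcal{G}(u)|_Y^2\le C(1+|\mathcal{G}(u)|_Y^2),
\]
with $C=\max\{\|\Gamma^a\|_{\mathrm{op}},\|\Gamma^m\|_{\mathrm{op}}\}$.

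There is no real obstacle here; the only subtlety is recognising the diagonal-conjugation identity to bound the Hadamard product by an operator norm, rather than trying to bound Hadamard products of general matrices directly (for which the naive submultiplicative inequality fails). Both bounds are uniform in $u\in X$, which is what the subsequent compactness/consistency arguments will need.
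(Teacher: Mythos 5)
Your proof is correct. The lower bound is exactly the paper's argument: Schur's product theorem gives positive semi-definiteness of $\mathcal{G}(u)\mathcal{G}(u)^*\circ\Gamma^m$, so $\Gamma(u)\succeq\Gamma^a$ and $\lambda_{\min}(u)\ge\lambda_{\min}(\Gamma^a)>0$ uniformly in $u$. For the upper bound you take a cleaner route than the paper: you use the exact identity $\mathcal{G}(u)\mathcal{G}(u)^*\circ\Gamma^m = D_{\mathcal{G}(u)}\Gamma^m D_{\mathcal{G}(u)}$ with $D_{\mathcal{G}(u)}=\diag(\mathcal{G}_j(u))$ and submultiplicativity of the operator norm, giving $\|\mathcal{G}(u)\mathcal{G}(u)^*\circ\Gamma^m\|_{\mathrm{op}}\le|\mathcal{G}(u)|_Y^2\|\Gamma^m\|_{\mathrm{op}}$ directly. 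The paper instead passes through the Frobenius norm: it notes $B(u)=\mathcal{G}(u)\mathcal{G}(u)^*$ is rank one with $\|B(u)\|_{\mathrm{op}}=|\mathcal{G}(u)|_Y^2$, bounds $\|B(u)\circ\Gamma^m\|_{\mathrm{Frob}}^2$ entrywise by $\max_{i,j}(\Gamma^m_{ij})^2\,\|B(u)\|_{\mathrm{Frob}}^2$, and invokes equivalence of norms on $\mathbb{R}^{J\times J}$. Both yield $\lambda_{\max}(u)\le C(1+|\mathcal{G}(u)|_Y^2)$; your diagonal-conjugation identity avoids the norm-equivalence detour and gives an explicit constant, while the paper's entrywise estimate is more generic (it would apply to any bounded factor in the Hadamard product, not just one conjugate to a diagonal scaling). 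The only pedantic point is that $\max_j|\mathcal{G}_j(u)|\le|\mathcal{G}(u)|_Y$ uses that $|\cdot|_Y$ dominates the max norm, which holds up to a constant in any finite-dimensional norm, so nothing is lost.
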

\begin{proof}
We first show the lower bound. $\Gamma^m$ is assumed to be positive definite, and $\mathcal{G}(u)\mathcal{G}(u)^*$ is positive semi-definite. By Schur's product theorem, it follows that the Hadamard product $\mathcal{G}(u)\mathcal{G}(u)^*\circ\Gamma^m$ is positive semi-definite. Now $\Gamma^a$ is positive definite, and so there exists $\alpha > 0$ such that for any $x \in Y$,
\[
x^*\Gamma(u)x = x^*\Gamma^ax + x^*(\mathcal{G}(u)\mathcal{G}(u)^*\circ\Gamma^m)x \geq \alpha|x|_Y^2.
\]
It follows that for any $u \in X$, the smallest eigenvalue of $\Gamma(u)$ is at least as large as $\alpha$, and we denote it $\lambda_{\min}^* := \inf_{u\in X}\lambda_{\min}(u) \geq \alpha > 0$. For the upper bound, first consider the matrix $B(u) = \mathcal{G}(u)\mathcal{G}(u)^*$. Then it can be checked that $\mathcal{G}(u)$ is an eigenvector of $B(u)$ with eigenvalue $|\mathcal{G}(u)|^2$. Now $B(u)$ has rank $1$ and so this is its only non-zero eigenvalue. Furthermore it is symmetric, and so we in fact have $\|B(u)\|_{\text{op}} = |\mathcal{G}(u)|_Y^2$. $\Gamma(u)$ is also symmetric, and so by equivalence of norms on $\mathbb{R}^{J\times J}$ we have
\begin{align*}
\lambda_{\max}(u)^2 &= \|\Gamma(u)\|_{\text{op}}^2 \leq C(\|\Gamma^a\|_{\text{op}}^2 + \|B(u)\circ\Gamma^m\|_{\text{op}}^2) \leq C(1 + \|B(u)\circ\Gamma^m\|_{\text{Frob}}^2)\\
&= C\left(1 + \sum_{i,j=1}^J (\mathcal{G}_i(u)\mathcal{G}_j(u)\Gamma^m_{ij})^2\right) \leq C\left(1 + \max_{i,j}(\Gamma^m_{ij})^2\sum_{i,j=1}^J (\mathcal{G}_i(u)\mathcal{G}_j(u))^2\right)\\
&\leq C\left(1 + \|B(u)\|^2_{\text{Frob}}\right) \leq C\left(1 + \|B(u)\|^2_{\text{op}}\right) = C\left(1 + |\mathcal{G}(u)|_Y^4\right)
\end{align*}
and so $\lambda_{\max}(u) \leq C(1+|\mathcal{G}(u)|^2)$.
\end{proof}

We can now state and prove the main result concerning convergence of MAP estimators; the general structure of the proof is similar to that of Theorem 4.1 in \cite{DLSV13}.
\begin{theorem}
\label{thm:smallnoise}
For every $n \in \mathbb{N}$, let $u_n \in E$ be a minimizer of $J_n$ given above. Then there exist a $u^* \in E$ and a subsequence of $(u_n)_{n\in\mathbb{N}}$ that converges weakly to $u^*$ in $E$ almost surely. For any such $u^*$, we have $\mathcal{G}(u^*) = \mathcal{G}(u^\dagger)$. 
\end{theorem}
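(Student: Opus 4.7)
The plan is to work on an almost-sure event where the noise realisations admit a (random) subsequence along which the weighted residual $|\mathcal{G}(u^\dagger) - y_n|^2_{\Gamma(u^\dagger)}$ is of order $1/n^2$, to convert that into a uniform bound on $\|u_{n_k}\|_E$, and then to pass to weak limits using the compact embedding of the Cameron--Martin space into $X$ together with continuity of $\mathcal{G}$.

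First observe that $n(y_n - \mathcal{G}(u^\dagger)) \sim N(0,\Gamma(u^\dagger))$ independently across $n$, so the random variables $W_n := n^2|\mathcal{G}(u^\dagger) - y_n|^2_{\Gamma(u^\dagger)}$ are i.i.d.\ with $\chi^2(J)$ distribution. Choosing $M > 0$ with $p := \mathbb{P}(W_1 \leq M) > 0$ and applying the second Borel--Cantelli lemma to the i.i.d.\ events $\{W_n \leq M\}$, I obtain that almost surely there is a subsequence $(n_k)_{k\in\mathbb{N}}$ with $W_{n_k} \leq M$ for every $k$. Along this subsequence, the minimality $I_{n_k}(u_{n_k}) \leq I_{n_k}(u^\dagger)$ combined with the lower bound $\log\det\Gamma(u_{n_k}) \geq J\log\lambda_{\min}^*$ from \cref{lem:eigbound} yields
\[
\tfrac{1}{2}\|u_{n_k}\|_E^2 \;\leq\; \tfrac{1}{2}\|u^\dagger\|_E^2 + \tfrac{M}{2} + \tfrac{1}{2}\log\det\Gamma(u^\dagger) - \tfrac{J}{2}\log\lambda_{\min}^* \;=:\; C,
\]
uniformly in $k$.

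Since $E$ is a separable Hilbert space, a further subsequence (still denoted $(n_k)$) converges weakly in $E$ to some $u^*\in E$. The Cameron--Martin space of a Gaussian measure on a separable Banach space embeds compactly into that space, so $u_{n_k}\to u^*$ strongly in $X$, and \cref{assump:mixed}(i) then yields $\mathcal{G}(u_{n_k})\to\mathcal{G}(u^*)$ in $Y$ and $\Gamma(u_{n_k})\to\Gamma(u^*)$. Boundedness of $\|u_{n_k}\|_X$ further gives boundedness of $|\mathcal{G}(u_{n_k})|_Y$, and hence by \cref{lem:eigbound} the quantity $\log\det\Gamma(u_{n_k})$ is bounded \emph{above} as well. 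Substituting back into $I_{n_k}(u_{n_k})\leq C$ then gives $n_k^2|\mathcal{G}(u_{n_k})-y_{n_k}|^2_{\Gamma(u_{n_k})}\leq C'$; combined with $y_{n_k}\to\mathcal{G}(u^\dagger)$ (since $|y_{n_k}-\mathcal{G}(u^\dagger)|_Y^2\leq W_{n_k}/(n_k^2\lambda_{\min}^*)$) and the continuity already established, passing to the limit yields $|\mathcal{G}(u^*) - \mathcal{G}(u^\dagger)|^2_{\Gamma(u^*)} = 0$, i.e.\ $\mathcal{G}(u^*) = \mathcal{G}(u^\dagger)$.

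The main obstacle is the subsequence extraction at the outset: since the $W_n$ are i.i.d.\ and non-decaying, the full sequence is almost surely unbounded (growing like $\log n$), so without the Borel--Cantelli argument one cannot make $\|u_n\|_E$ uniformly bounded. This is precisely why the statement only asserts existence of a weakly convergent subsequence rather than weak convergence of the entire sequence. The remaining steps are routine applications of weak compactness, the compact Cameron--Martin embedding, and \cref{lem:eigbound}.
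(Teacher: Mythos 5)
Your proof is correct up to one small slip in a constant, and it takes a genuinely different route from the paper's. The paper compares $J_n(u_n)\le J_n(u^\dagger)$, controls the resulting cross term with Cauchy--Schwarz and Young's inequality, and then takes \emph{expectations}: it obtains $\mathbb{E}\|u_n\|_E^2\le\|u^\dagger\|_E^2+K$ and $\mathbb{E}|\mathcal{G}(u^\dagger)-\mathcal{G}(u_n)|^2_{\Gamma(u_n)}\to 0$ for the whole sequence, extracts subsequences via the $L^2(\Omega;E)$ argument of \cite{DLSV13}, and must run a separate contradiction argument to rule out $|\mathcal{G}(u_{n_k})|_Y\to\infty$, because no $\omega$-wise bound on $\|u_{n_k}\|_X$ is available there. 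You instead exploit the fact that $I_n(u^\dagger)$ contains no cross term at all---the misfit evaluated at $u^\dagger$ is exactly $W_n/2$ with $W_n\sim\chi^2(J)$ i.i.d.---and use the second Borel--Cantelli lemma to produce an almost-sure (realization-dependent) subsequence on which $W_{n_k}\le M$, giving a \emph{pointwise} uniform bound on $\|u_{n_k}\|_E$. This buys you elementary $\omega$-by-$\omega$ weak compactness in the Hilbert space $E$ in place of weak compactness in $L^2(\Omega;E)$, no Young-inequality estimate, and no case analysis for unbounded $\mathcal{G}(u_{n_k})$, since the $E$-bound already bounds $\|u_{n_k}\|_X$, hence $|\mathcal{G}(u_{n_k})|_Y$ and $\lambda_{\max}(u_{n_k})$ via \cref{lem:eigbound}. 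The price is that your subsequence and limit $u^*$ are random and the measurability of the selection is glossed over; since the paper's $u^*$ is also realization-dependent, this is consistent with the intended reading of the statement. The one correction: the bound $|y_{n_k}-\mathcal{G}(u^\dagger)|_Y^2\le W_{n_k}/(n_k^2\lambda_{\min}^*)$ should read $|y_{n_k}-\mathcal{G}(u^\dagger)|_Y^2\le\lambda_{\max}(u^\dagger)\,W_{n_k}/n_k^2$, because the weighted norm satisfies $|z|^2_{\Gamma}\ge|z|_Y^2/\lambda_{\max}(\Gamma)$ rather than $|z|^2_{\Gamma}\ge\lambda_{\min}(\Gamma)|z|_Y^2$; the conclusion $y_{n_k}\to\mathcal{G}(u^\dagger)$ is unaffected.
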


\begin{proof}
We expand $J_n(u)$:
\begin{align*}
J_n(u) &:= |\mathcal{G}(u^\dagger)-\mathcal{G}(u)|_{\Gamma(u)}^2 + \frac{1}{n^2}\|u\|^2_E + \frac{1}{n^2}\log\det\Gamma(u)\\
&\hspace{1cm} + \frac{2}{n}\langle \mathcal{G}(u^\dagger) - \mathcal{G}(u),\Gamma(u)^{-1}(\eta^m_n\mathcal{G}(u^\dagger) + \eta^a_n)\rangle_Y + \frac{1}{n^2} |\eta^m_n\mathcal{G}(u^\dagger)+\eta_n^a|_{\Gamma(u)}^2.
\end{align*}

Since $u_n$ minimizes $J_n$, we have that $J_n(u_n) \leq J_n(u^\dagger)$, that is
\begin{align}
J_n(u_n) \leq \frac{1}{n^2}\|u^\dagger\|_E^2 + \frac{1}{n^2}\log\det\Gamma(u^\dagger) + \frac{1}{n^2} |\eta^m_n\mathcal{G}(u^\dagger)+\eta_n^a|_{\Gamma(u^\dagger)}^2\notag,
\end{align}
from which we see
\begin{align}
|\mathcal{G}(u^\dagger)&-\mathcal{G}(u_n)|_{\Gamma(u_n)}^2 + \frac{1}{n^2}\|u_n\|^2_E + \frac{1}{n^2}\log\det\Gamma(u_n)\notag \\
&\leq \frac{1}{n^2}\|u^\dagger\|_E^2 + \frac{1}{n^2}\log\det\Gamma(u^\dagger)  + \frac{1}{n^2} |\eta^m_n\mathcal{G}(u^\dagger)+\eta_n^a|_{\Gamma(u^\dagger)}^2\label{eq:maporig}\\
&\hspace{2cm} + \frac{2}{n}\left|\langle \mathcal{G}(u^\dagger) - \mathcal{G}(u_n),\Gamma(u_n)^{-1}(\eta^m_n\mathcal{G}(u^\dagger) + \eta^a_n)\rangle_Y\right|.\notag
\end{align}

We deal first with the final term on the right hand side. By Cauchy-Schwarz and Young's inequalities, and the lower eigenvalue bound from \cref{lem:eigbound},
\begin{align*}
\frac{2}{n}|\langle \mathcal{G}&(u^\dagger) - \mathcal{G}(u_n),\Gamma(u_n)^{-1}(\eta^m_n\mathcal{G}(u^\dagger) + \eta^a_n)\rangle_Y|\\
&= \left|\left\langle\Gamma(u_n)^{-1/2}(\mathcal{G}(u^\dagger) - \mathcal{G}(u_n)),\frac{2}{n}\Gamma(u_n)^{-1/2}(\eta^m_n\mathcal{G}(u^\dagger) + \eta^a_n) \right\rangle_Y\right|\\
&\leq |\mathcal{G}(u^\dagger) - \mathcal{G}(u_n)|_{\Gamma(u_n)}\left|\frac{2}{n}\Gamma(u_n)^{-1/2}(\eta^m_n\mathcal{G}(u^\dagger) + \eta^a_n)\right|_Y\\
&\leq \frac{1}{2}|\mathcal{G}(u^\dagger) - \mathcal{G}(u_n)|_{\Gamma(u_n)}^2 + \frac{2}{n^2}\left|\Gamma(u_n)^{-1/2}(\eta^m_n\mathcal{G}(u^\dagger) + \eta^a_n)\right|_Y^2\\
&\leq \frac{1}{2}|\mathcal{G}(u^\dagger) - \mathcal{G}(u_n)|_{\Gamma(u_n)}^2 + \frac{2}{\lambda_{\min}^*n^2}\left|\eta^m_n\mathcal{G}(u^\dagger) + \eta^a_n\right|_Y^2
\end{align*}
and so taking expectations,
\begin{align*}
\frac{2}{n}\mathbb{E}\langle \mathcal{G}&(u^\dagger) - \mathcal{G}(u_n),\Gamma(u_n)^{-1}(\eta^m_n\mathcal{G}(u^\dagger) + \eta^a_n)\rangle_Y\\
&\leq \frac{1}{2}\mathbb{E}|\mathcal{G}(u^\dagger) - \mathcal{G}(u_n)|_{\Gamma(u_n)}^2 + \frac{2}{\lambda_{\min}^*n^2}\mathbb{E}\left|\eta^m_n\mathcal{G}(u^\dagger) + \eta^a_n\right|_Y^2.
\end{align*}

Rearranging the inequality \cref{eq:maporig} and taking expectations yields
\begin{align*}
\frac{1}{2}\mathbb{E}|\mathcal{G}(u^\dagger)&-\mathcal{G}(u_n)|_{\Gamma(u_n)}^2 + \frac{1}{n^2}\mathbb{E}\|u_n\|^2_E\\
&\leq \frac{1}{n^2}\|u^\dagger\|_E^2 + \frac{2}{\lambda_{\min}^*n^2}\mathbb{E}|\eta^m_n\mathcal{G}(u^\dagger)+\eta^a_n|_Y^2 + \frac{1}{n^2} \mathbb{E}|\eta^m_n\mathcal{G}(u^\dagger)+\eta_n^a|_{\Gamma(u^\dagger)}^2 \\
&\hspace{1cm} + \frac{1}{n^2}\log\det\Gamma(u^\dagger) - \frac{1}{n^2}\mathbb{E}\log\det\Gamma(u_n)\\
&\leq \frac{1}{n^2}\|u^\dagger\|_E^2 + \frac{3}{\lambda_{\min}^*n^2}\mathbb{E}|\eta^m_n\mathcal{G}(u^\dagger)+\eta^a_n|_Y^2\\
&\hspace{1cm} + \frac{1}{n^2}\log\det\Gamma(u^\dagger) - \frac{1}{n^2}\mathbb{E}\log\det\Gamma(u_n).
\end{align*}
Now the term $\mathbb{E}|\eta^m_n\mathcal{G}(u^\dagger)+\eta^a_n|_Y^2$ is the expectation of the square of the norm of a Gaussian random variable, which is finite. Since $(\eta^m_n)_{n\geq 1}$ and $(\eta^a_n)_{n\geq 1}$ are iid, this expectation is independent of $n$. To deal with the final term on the right-hand side, we use the lower bound from  \cref{lem:eigbound} to see that  $\det\Gamma(u) \geq (\lambda_{\min}^*)^J$, and so $\log\det\Gamma(u_n) \geq J\log\lambda_{\min}^*$ for all $n \in \mathbb{N}$.

We may therefore write the above as
\begin{align*}
\frac{1}{2}\mathbb{E}|\mathcal{G}(u^\dagger)&-\mathcal{G}(u_n)|_{\Gamma(u_n)}^2 + \frac{1}{n^2}\mathbb{E}\|u_n\|^2_E \leq \frac{1}{n^2}\|u^\dagger\|_E^2 + \frac{K}{n^2}
\end{align*}
for some $K > 0$. We deduce that
\begin{align*}
\mathbb{E}|\mathcal{G}(u^\dagger)-\mathcal{G}(u_n)|_{\Gamma(u_n)}^2 \rightarrow 0,\;\;\;\mathbb{E}\|u_n\|_E^2 \leq \|u^\dagger\|_E^2 + K.
\end{align*}
From this it can be shown that there exists $u^* \in E$ and a subsequence $(u_{n_k})_{k\geq 1}$ of $(u_n)_{n\geq 1}$ such that as $k\rightarrow\infty$,
\begin{align}
|\mathcal{G}(u^\dagger)-\mathcal{G}(u_{n_k})|_{\Gamma(u_{n_k})}^2\rightarrow 0&\;\;\;\text{almost surely,}\label{eq:normzero}\\
\mathbb{E}\langle u_{n_k},v\rangle_E\rightarrow\mathbb{E}\langle u^*,v\rangle_E&\;\;\;\text{for all $v \in E$}\label{eq:innerprod}.
\end{align}
See the proof of Theorem 4.1 in \cite{DLSV13} for details.

From \cref{eq:normzero} we have that $\Gamma(u_{n_k})^{-1/2}(\mathcal{G}(u^\dagger) - \mathcal{G}(u_{n_k})) \rightarrow 0$ almost surely. We show that this implies that $\mathcal{G}(u_{n_k})\rightarrow\mathcal{G}(u^\dagger)$ almost surely, i.e. that $\Gamma(u_{n_k})$ does not become degenerate in the limit. From \cref{lem:eigbound}, it follows that
\[
|\Gamma(u_{n_k})^{-1/2}(\mathcal{G}(u^\dagger) - \mathcal{G}(u_{n_k}))|_Y \geq \frac{1}{\sqrt{\lambda_{\max}(u_{n_k})}}|\mathcal{G}(u^\dagger) - \mathcal{G}(u_{n_k})|_Y \geq \frac{|\mathcal{G}(u^\dagger) - \mathcal{G}(u_{n_k})|_Y}{C(1+|\mathcal{G}(u_{n_k})|)}
\]
and so the right hand term must tend to zero almost surely. First suppose that $\mathcal{G}(u_{n_k})$ is not bounded, then there exists a subsequence along which $|\mathcal{G}(u_{n_k})|_Y\rightarrow\infty$ almost surely. It follows that
\begin{align*}
\frac{|\mathcal{G}(u^\dagger) - \mathcal{G}(u_{n_k})|_Y}{C(1+|\mathcal{G}(u_{n_k})|_Y)} &\geq \frac{||\mathcal{G}(u^\dagger)|_Y - |\mathcal{G}(u_{n_k})|_Y|}{C(1+|\mathcal{G}(u_{n_k})|_Y)} = \frac{\left|\frac{|\mathcal{G}(u^\dagger)|_Y}{|\mathcal{G}(u_{n_k})|_Y}-1\right|}{C\left(\frac{1}{|\mathcal{G}(u_{n_k})|_Y}+1\right)} \rightarrow \frac{1}{C} > 0
\end{align*}
This is a contradiction, since the left hand side tends to zero almost surely. We can hence assume that $\mathcal{G}(u_{n_k})$ remains bounded, say by $L$. Then
\[
\frac{1}{C(1+L)}|\mathcal{G}(u^\dagger) - \mathcal{G}(u_{n_k})|_Y \leq \frac{|\mathcal{G}(u^\dagger) - \mathcal{G}(u_{n_k})|_Y}{C(1+|\mathcal{G}(u_{n_k})|_Y)}\rightarrow 0\;\;\;\text{a.s.}
\]
and so $\mathcal{G}(u_{n_k})\rightarrow\mathcal{G}(u^\dagger)$ almost surely.

We conclude the proof in the same way as in \cite{DLSV13}. The convergence $\cref{eq:innerprod}$ implies that there exists a further subsequence along which $u_{n_k}\rightarrow u^*$ weakly in $E$ almost surely. Since $E$ is compactly embedded in $X$ it follows that $u_{n_k}\rightarrow u^*$ in $X$ almost surely. Then by the continuity of $\mathcal{G}$ on $X$, $\mathcal{G}(u_{n_k})\rightarrow \mathcal{G}(u^*)$ almost surely. The result follows.
\end{proof}

Suppose now that $u^\dagger \in X$. As in \cite{DLSV13}, we get the following weaker result.

\begin{corollary}
Suppose that $\mathcal{G}$ and $u_n$ satisfy the assumptions of \cref{thm:smallnoise}, and that $u^\dagger \in X$. Then there exists a subsequence of $(\mathcal{G}(u_n))_{n\in\mathbb{N}}$ converging to $\mathcal{G}(u^\dagger)$ almost surely.
\end{corollary}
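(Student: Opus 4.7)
The plan is to adapt the argument from \cref{thm:smallnoise} by approximating $u^\dagger \in X$ with a sequence in the Cameron-Martin space $E$. Since $\mu_0$ is a non-degenerate Gaussian measure on $X$, its Cameron-Martin space $E$ is densely embedded in $X$, so I would first choose a sequence $(u^\dagger_m)_{m\in\mathbb{N}}\subset E$ with $u^\dagger_m\to u^\dagger$ in $X$, and then invoke continuity of $\mathcal{G}$ on $X$ to deduce $\mathcal{G}(u^\dagger_m)\to\mathcal{G}(u^\dagger)$ in $Y$. The obstacle that is absent in the theorem is that $\|u^\dagger\|_E$ may be infinite, so the one-shot comparison $J_n(u_n)\leq J_n(u^\dagger)$ is unavailable; the workaround is to use $J_n(u_n)\leq J_n(u^\dagger_m)$ for each fixed $m$ and then perform a diagonal argument.

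Next I would repeat the expansion of $J_n(u_n)$ used in the proof of \cref{thm:smallnoise}, applying Cauchy-Schwarz and Young's inequality to the cross term arising from the noise, together with the uniform lower eigenvalue bound $\lambda_{\min}^*$ from \cref{lem:eigbound}. After taking expectations and using that $\mathbb{E}|\eta^m_n\mathcal{G}(u^\dagger)+\eta^a_n|^2_Y$ is a finite constant independent of $n$ and that $\log\det\Gamma(u_n)\geq J\log\lambda_{\min}^*$, the estimate becomes
\begin{align*}
\tfrac{1}{2}\mathbb{E}|\mathcal{G}(u^\dagger)-\mathcal{G}(u_n)|_{\Gamma(u_n)}^2 &\leq |\mathcal{G}(u^\dagger_m)-\mathcal{G}(u^\dagger)|^2_{\Gamma(u^\dagger_m)} + \tfrac{1}{n^2}\|u^\dagger_m\|_E^2 + \tfrac{C_m + K}{n^2},
\end{align*}
where $K$ is independent of $m$ and $n$, and the $m$-dependent constant $C_m$ arises from $\log\det\Gamma(u^\dagger_m)$, which stays bounded in $m$ since $\mathcal{G}(u^\dagger_m)$ converges in $Y$ and \cref{lem:eigbound} gives an upper bound $\lambda_{\max}(u^\dagger_m)\leq C(1+|\mathcal{G}(u^\dagger_m)|^2_Y)$.

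With this estimate in hand, I would first send $n\to\infty$ for each fixed $m$ to obtain
\[
\limsup_{n\to\infty}\mathbb{E}|\mathcal{G}(u^\dagger)-\mathcal{G}(u_n)|_{\Gamma(u_n)}^2 \leq 2|\mathcal{G}(u^\dagger_m)-\mathcal{G}(u^\dagger)|^2_{\Gamma(u^\dagger_m)},
\]
and then send $m\to\infty$ using the convergence $\mathcal{G}(u^\dagger_m)\to\mathcal{G}(u^\dagger)$ together with the uniform eigenvalue bounds to conclude $\mathbb{E}|\mathcal{G}(u^\dagger)-\mathcal{G}(u_n)|_{\Gamma(u_n)}^2\to 0$. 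A subsequence $(u_{n_k})$ therefore satisfies $|\mathcal{G}(u^\dagger)-\mathcal{G}(u_{n_k})|_{\Gamma(u_{n_k})}\to 0$ almost surely.

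Finally, to remove the weighting and obtain convergence in $Y$, I would reuse the end of the proof of \cref{thm:smallnoise} essentially verbatim: the inequality $|\Gamma(u_{n_k})^{-1/2}(\mathcal{G}(u^\dagger)-\mathcal{G}(u_{n_k}))|_Y\geq|\mathcal{G}(u^\dagger)-\mathcal{G}(u_{n_k})|_Y/(C(1+|\mathcal{G}(u_{n_k})|_Y))$ coupled with the dichotomy (boundedness of $\mathcal{G}(u_{n_k})$ versus a contradictory divergent subsequence) forces $\mathcal{G}(u_{n_k})\to\mathcal{G}(u^\dagger)$ almost surely. The main obstacle throughout is the dependence of the error estimate on the diverging norm $\|u^\dagger_m\|_E$, but this is neutralized by the factor $1/n^2$, provided we choose $m$ before $n$; note that we cannot assert any $E$-convergence of the $u_n$ themselves, which is precisely why only the weaker conclusion on $\mathcal{G}(u_n)$ is claimed.
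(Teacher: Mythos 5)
Your proposal is correct and follows essentially the same route as the paper: the paper likewise approximates $u^\dagger$ by an element $v \in E$ with $\|u^\dagger - v\|_X \leq \eps$ (your sequence $u^\dagger_m$ is just a repackaging of this), compares $J_n(u_n) \leq J_n(v)$, takes expectations using the eigenvalue bounds of \cref{lem:eigbound}, sends $n\to\infty$ before the approximation parameter to get $\limsup_n \mathbb{E}|\mathcal{G}(u^\dagger)-\mathcal{G}(u_n)|^2_{\Gamma(u_n)} \leq C\eps^2$, and then reuses the non-degeneracy dichotomy from the end of \cref{thm:smallnoise} to strip off the $\Gamma(u_n)^{-1/2}$ weighting. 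The only cosmetic difference is that the paper invokes the local Lipschitz property of $\mathcal{G}$ rather than bare continuity to bound $|\mathcal{G}(u^\dagger)-\mathcal{G}(v)|^2_{\Gamma(v)}$ by $C\eps^2$, which is immaterial to the conclusion.
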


\begin{proof}
Much of the proof is analogous to that of the previous theorem, and so we omit some details. Let $\eps > 0$, then since $E$ is dense in $X$ there exists $v \in E$ such that $\|u^\dagger - v\|_X \leq \eps$. Then by definition of $u_n$ we have $J_n(u_n) \leq J_n(v)$ for all $n \in \mathbb{N}$, that is
\begin{align*}
 &|\mathcal{G}(u^\dagger)-\mathcal{G}(u_n)|_{\Gamma(u_n)}^2 + \frac{1}{n^2}\|u_n\|^2_E + \frac{1}{n^2}\log\det\Gamma(u_n)\\
&\hspace{1cm} + \frac{2}{n}\langle \mathcal{G}(u^\dagger) - \mathcal{G}(u_n),\Gamma(u_n)^{-1}(\eta^m_n\mathcal{G}(u^\dagger) + \eta^a_n)\rangle_Y + \frac{1}{n^2} |\eta^m_n\mathcal{G}(u^\dagger)+\eta_n^a|_{\Gamma(u_n)}^2\\
&\leq  |\mathcal{G}(u^\dagger)-\mathcal{G}(v)|_{\Gamma(u)}^2 + \frac{1}{n^2}\|u\|^2_E + \frac{1}{n^2}\log\det\Gamma(v)\\
&\hspace{1cm} + \frac{2}{n}\langle \mathcal{G}(u^\dagger) - \mathcal{G}(v),\Gamma(v)^{-1}(\eta^m_n\mathcal{G}(u^\dagger) + \eta^a_n)\rangle_Y + \frac{1}{n^2} |\eta^m_n\mathcal{G}(u^\dagger)+\eta_n^a|_{\Gamma(v)}^2.
\end{align*}
Dropping the (positive) second and final terms on the left-hand side, rearranging, and using the same bounds as in the previous proof, we have
\begin{align*}
|\mathcal{G}(u^\dagger)-\mathcal{G}(u_n)|_{\Gamma(u_n)}^2 &\leq |\mathcal{G}(u^\dagger)-\mathcal{G}(v)|_{\Gamma(v)}^2 + \frac{1}{n^2}\|v\|_E^2 + \frac{1}{n^2}\\
&\hspace{1cm}+\frac{1}{n^2}\log\det\Gamma(v) - \frac{1}{n^2}\log\det\Gamma(u_n)\\
&\hspace{1cm}+\frac{1}{2}|\mathcal{G}(u^\dagger) - \mathcal{G}(u_n)|_{\Gamma(u_n)}^2 + \frac{1}{2}|\mathcal{G}(u^\dagger) - \mathcal{G}(v)|_{\Gamma(v)}^2\\
&\hspace{1cm}+\frac{5}{\lambda_{\min}^*n^2}|\eta^m_n\mathcal{G}(u^\dagger)+\eta^a_n|_Y^2.
\end{align*}
The Lipschitz property of $\mathcal{G}$ on $X$ tells us that $|\mathcal{G}(u^\dagger) - \mathcal{G}(v)|_{\Gamma(v)} \leq C\eps^2$, hence taking expectations above we deduce that
\[
\mathbb{E}|\mathcal{G}(u^\dagger)-\mathcal{G}(u_n)|_{\Gamma(u_n)}^2 \leq C\eps^2 + \frac{C_\eps}{n^2}
\]
and so
\[
\limsup_{n\rightarrow\infty}\mathbb{E}|\mathcal{G}(u^\dagger)-\mathcal{G}(u_n)|_{\Gamma(u_n)}^2 \leq C\eps^2.
\]
Since the $\liminf$ is non-negative and $\eps > 0$ is arbitrary, the expectation must converge to zero. We deduce that $\Gamma(u_n)^{-1/2}(\mathcal{G}(u^\dagger)-\mathcal{G}(u_n))\rightarrow 0$ along a subsequence almost surely, and so by the same argument as in the previous proof, $\mathcal{G}(u_n)\rightarrow\mathcal{G}(u^\dagger)$ along a subsequence almost surely.
\end{proof}

\subsection{Large data limit}
We provide some insight into the large data limit of MAP estimators. Again we assume that there exists an true state $u^\dagger \in X$ so that the data $y$ arises from the equation
\[
y = (\mathbf{1}+\eta^m)\mathcal{G}(u^\dagger) + \eta^a.
\]
Suppose we have a sequence of independent observations $y = (y_k)_{k=1}^n$, $y_k \in \mathbb{R}^J$, so that
\begin{align*}
y_k = (\mathbf{1}+\eta^m_k)\mathcal{G}(u^\dagger) + \eta^a_k,\;\;\;&k=1,\ldots,n\\
&\eta^m_k\sim N(0,\Gamma^m)\text{ iid},\\
&\eta^a_k \sim N(0,\Gamma^a)\text{ iid}.
\end{align*}
This can be cast in the previous setting by concatenating the observations, so $Y = \mathbb{R}^{nJ}$. We study the limit of the resulting posterior as $n\rightarrow\infty$. Using the same prior on $u$ as before, since the observations are independent, the posterior distribution is given by
\[
\frac{\dee \mu^{y_1,\ldots,y_n}}{\dee \mu_0}(u) \propto \exp\left(-\frac{1}{2}\sum_{k=1}^n |\mathcal{G}(u)-y_k|_{\Gamma(u)}^2 - \frac{n}{2}\log\det\Gamma(u)\right),
\]
where $\Gamma(u)$ is as defined previously. By \cref{thm:map_mixed}, MAP estimators of the posterior $\mu^{y_1,\ldots,y_n}$ are minimizers of
\[
I_n(u) := \frac{1}{2}\|u\|_E^2 + \frac{1}{2}\sum_{k=1}^n|\mathcal{G}(u)-y_k|_{\Gamma(u)}^2 + \frac{n}{2}\log\det\Gamma(u).
\]
As before, we rescale $I_n$ by $2/n$, preserving its minimizers. We look at the limit of $2I_n(u)/n$ for large $n$ using the law of large numbers.

\begin{proposition}
\label{limitfunc}
Define the functionals $J_n:E\rightarrow\mathbb{R}$ by
\[
J_n(u) := \frac{2}{n}I_n(u) = \frac{1}{n}\|u\|_E^2 + \frac{1}{n}\sum_{k=1}^n |\mathcal{G}(u)-y_k|^2_{\Gamma(u)} + \log\det\Gamma(u).
\]
For all $u \in E$, we have $\mathbb{P}$-almost surely
\[
\lim_{n\rightarrow\infty} J_n(u) = |\mathcal{G}(u^\dagger) - \mathcal{G}(u)|^2_{\Gamma(u)} + \tr(\Gamma(u^\dagger)\Gamma(u)^{-1}) + \log\det\Gamma(u) =: \mathcal{J}(u).
\]
\end{proposition}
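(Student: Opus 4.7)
The plan is to proceed termwise in the decomposition of $J_n(u)$, treating the fixed argument $u\in E$ as a constant and applying the strong law of large numbers (SLLN) to the empirical average over the iid data. The first term $\tfrac{1}{n}\|u\|_E^2$ trivially tends to zero as $n\to\infty$, since $u\in E$ is fixed, and the third term $\log\det\Gamma(u)$ is independent of $n$ and so carries through unchanged. Everything interesting is concentrated in the middle term $\frac{1}{n}\sum_{k=1}^n|\mathcal{G}(u)-y_k|^2_{\Gamma(u)}$, which must produce the quadratic misfit together with the trace term in $\mathcal{J}(u)$.

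The key step is to write $y_k-\mathcal{G}(u) = (\mathcal{G}(u^\dagger)-\mathcal{G}(u)) + Z_k$ where $Z_k := \eta^m_k\mathcal{G}(u^\dagger) + \eta^a_k$. Since $\eta^m_k$ and $\eta^a_k$ are independent and centred, $Z_k$ is a centred random vector in $Y$ with covariance exactly $\Gamma(u^\dagger) = \Gamma^a + \mathcal{G}(u^\dagger)\mathcal{G}(u^\dagger)^*\circ\Gamma^m$ (this is the same observation used in \cref{ex:gaussian}). Expanding the weighted norm gives
\[
|\mathcal{G}(u)-y_k|^2_{\Gamma(u)} = |\mathcal{G}(u^\dagger)-\mathcal{G}(u)|^2_{\Gamma(u)} - 2\langle \Gamma(u)^{-1}(\mathcal{G}(u^\dagger)-\mathcal{G}(u)),Z_k\rangle_Y + |Z_k|^2_{\Gamma(u)}.
\]
Averaging over $k=1,\ldots,n$, the first summand is deterministic and equals $|\mathcal{G}(u^\dagger)-\mathcal{G}(u)|^2_{\Gamma(u)}$, which will form the leading misfit term in the limit.

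For the remaining two summands we invoke the SLLN. The linear-in-$Z_k$ term has mean zero and finite variance (its random part $\langle v, Z_k\rangle_Y$ with $v := \Gamma(u)^{-1}(\mathcal{G}(u^\dagger)-\mathcal{G}(u))$ fixed is a scalar Gaussian), so $\frac{1}{n}\sum_{k=1}^n\langle v,Z_k\rangle_Y \to 0$ $\mathbb{P}$-almost surely. The quadratic term $|Z_k|^2_{\Gamma(u)} = Z_k^*\Gamma(u)^{-1}Z_k$ is iid with finite mean
\[
\mathbb{E}\bigl[Z_k^*\Gamma(u)^{-1}Z_k\bigr] = \tr\bigl(\Gamma(u)^{-1}\mathbb{E}[Z_kZ_k^*]\bigr) = \tr\bigl(\Gamma(u)^{-1}\Gamma(u^\dagger)\bigr),
\]
so by SLLN $\frac{1}{n}\sum_{k=1}^n |Z_k|^2_{\Gamma(u)} \to \tr(\Gamma(u^\dagger)\Gamma(u)^{-1})$ $\mathbb{P}$-almost surely. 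Combining the three limits and the trivial contributions from the prior and log-det terms yields the stated expression for $\mathcal{J}(u)$.

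There is no substantial obstacle here: the only care required is the identification of $Z_k$ as a centred vector with covariance $\Gamma(u^\dagger)$ (so that the trace expression matches the statement), and the verification that the SLLN hypotheses hold, which reduces to finiteness of second moments of $Z_k$ — immediate since $Z_k$ is Gaussian and $\Gamma(u)^{-1}$ is bounded (uniformly via $\lambda_{\min}^*>0$ from \cref{lem:eigbound}). Since the exceptional null set in the SLLN depends on $u$, the convergence is pointwise almost-sure rather than uniform; this matches the phrasing of the proposition and is sufficient for the motivational purposes of the subsection.
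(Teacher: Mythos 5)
Your proposal is correct and follows essentially the same route as the paper: both apply the strong law of large numbers to the iid squared weighted residuals, with the limit identified by expanding $y_k-\mathcal{G}(u)$ into the deterministic misfit, a mean-zero cross term, and a quadratic term in the centred noise $Z_k$ whose expectation is $\tr(\Gamma(u^\dagger)\Gamma(u)^{-1})$. The only (immaterial) slip is the sign on the cross term in your expansion, which vanishes in the limit either way.
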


\begin{proof}
We use the strong law of large numbers. Fix $u \in E$ and define the sequence of iid random variables $(X_k)$ by
\begin{align*}
X_k &= |\mathcal{G}(u) - y_k|_{\Gamma(u)}^2 = |\mathcal{G}(u^\dagger) - \mathcal{G}(u) + \eta^m_k\mathcal{G}(u^\dagger) + \eta^a_k|_{\Gamma(u)}^2.
\end{align*}
Using the independence of $\eta^m \sim N(0,\Gamma^m)$ and $\eta^a \sim N(0,\Gamma^a)$, we see that these $X_k$ have mean given by
\begin{align*}
\mathbb{E}X_k &= \mathbb{E}|\mathcal{G}(u^\dagger) - \mathcal{G}(u)|^2_{\Gamma(u)} + \mathbb{E}|\eta_k^m\mathcal{G}(u^\dagger) + \eta_k^a|_{\Gamma(u)}^2\\
&\hspace{1cm}+ \mathbb{E}\langle\mathcal{G}(u^\dagger) - \mathcal{G}(u),\eta_k^m\mathcal{G}(u^\dagger) + \eta_k^a\rangle_{\Gamma(u)}\\
&= |\mathcal{G}(u^\dagger) - \mathcal{G}(u)|^2_{\Gamma(u)} + \mathbb{E}\sum_{i,j=1}^J \Gamma(u)^{-1}_{ij}(\eta^m_{k,i}\mathcal{G}(u^\dagger) + \eta^a_{k,i})(\eta^m_{k,j}\mathcal{G}(u^\dagger) + \eta^a_{k,j})\\
&\hspace{1cm}+ \mathbb{E}\sum_{i,j=1}^J \Gamma(u)^{-1}_{ij} (\mathcal{G}_i(u^\dagger)-\mathcal{G}_i(u))(\eta^m_{k,j}\mathcal{G}(u^\dagger) + \eta^a_{k,j})\\
&= |\mathcal{G}(u^\dagger) - \mathcal{G}(u)|^2_{\Gamma(u)} + \sum_{i,j=1}^J\Gamma(u)^{-1}_{ij}(\Gamma^a_{ij} + \mathcal{G}_j(u^\dagger)\mathcal{G}_j(u^\dagger)\Gamma^m_{ij})\\
&= |\mathcal{G}(u^\dagger) - \mathcal{G}(u)|^2_{\Gamma(u)} + \tr(\Gamma(u^\dagger)\Gamma(u)^{-1}).
\end{align*}
Using the bound $X_k \leq (|\mathcal{G}(u)|_Y + |y_k|_Y)^2$, and the fact $y_k \sim N(\mathcal{G}(u^\dagger),\Gamma(u^\dagger))$ coupled with the property that Gaussians have moments of all orders, we deduce that $X_k$ has finite variance. These conditions are sufficient for the strong law of large numbers to hold for $X_k$, and so we deduce that $\mathbb{P}$-almost surely,
\begin{align*}
\lim_{n\rightarrow\infty} J_n(u) &= \lim_{n\rightarrow\infty}\frac{1}{n}\|u\|^2_E + \lim_{n\rightarrow\infty}\frac{1}{n}\sum_{k=1}^n X_k + \log\det\Gamma(u)\\
&= |\mathcal{G}(u^\dagger) - \mathcal{G}(u)|^2_{\Gamma(u)} + \tr(\Gamma(u^\dagger)\Gamma(u)^{-1}) + \log\det\Gamma(u).
\end{align*}
\end{proof}

The limiting functional $\mathcal{J}$ is nice in that we can completely characterize its global minimizers:

\begin{proposition}
\label{limitmin}
The global minimizers of $\mathcal{J}$ are precisely the $u^* \in E$ for which $\mathcal{G}(u^*) = \mathcal{G}(u^\dagger)$.
\end{proposition}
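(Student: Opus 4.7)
The plan is to recognize that $\mathcal{J}(u)$ is essentially (up to an additive constant) twice the Kullback--Leibler divergence between two Gaussians on $\mathbb{R}^J$, namely $P := N(\mathcal{G}(u^\dagger),\Gamma(u^\dagger))$ and $Q_u := N(\mathcal{G}(u),\Gamma(u))$. Recall the classical formula
\[
2 D_{\mathrm{KL}}(P\,\|\,Q_u) = \tr(\Gamma(u)^{-1}\Gamma(u^\dagger)) + |\mathcal{G}(u^\dagger)-\mathcal{G}(u)|^2_{\Gamma(u)} - J + \log\det\Gamma(u) - \log\det\Gamma(u^\dagger).
\]
Rearranging yields
\[
\mathcal{J}(u) = 2 D_{\mathrm{KL}}(P\,\|\,Q_u) + J + \log\det\Gamma(u^\dagger),
\]
so, since the last two terms are constants independent of $u$, minimizing $\mathcal{J}$ over $E$ is equivalent to minimizing $D_{\mathrm{KL}}(P\,\|\,Q_u)$.

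First I would verify that $\Gamma(u)$ is genuinely positive definite for every $u \in E$, which is immediate from \cref{lem:eigbound}, so that $Q_u$ is a well-defined non-degenerate Gaussian and the KL formula above applies. I would then invoke Gibbs' inequality, $D_{\mathrm{KL}}(P\,\|\,Q_u) \geq 0$, with equality if and only if $P = Q_u$ as probability measures, which for non-degenerate Gaussians on $\mathbb{R}^J$ holds if and only if $\mathcal{G}(u) = \mathcal{G}(u^\dagger)$ and $\Gamma(u) = \Gamma(u^\dagger)$.

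Finally, I would observe that the condition $\mathcal{G}(u) = \mathcal{G}(u^\dagger)$ automatically forces $\Gamma(u) = \Gamma(u^\dagger)$, because $\Gamma(u) = \Gamma^a + \mathcal{G}(u)\mathcal{G}(u)^*\circ\Gamma^m$ depends on $u$ only through $\mathcal{G}(u)$. Hence the equality case in Gibbs' inequality reduces to the single requirement $\mathcal{G}(u) = \mathcal{G}(u^\dagger)$, and the set of global minimizers of $\mathcal{J}$ is exactly $\{u^* \in E : \mathcal{G}(u^*) = \mathcal{G}(u^\dagger)\}$; this set is nonempty since $u^\dagger \in E$ lies in it (or, in the case $u^\dagger \in X$ only, the existence of a minimizer is not being asserted here, only a characterization).

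The only non-routine step is recognizing the KL structure; once that is in place, the rest is Gibbs' inequality and the observation that $\Gamma$ is a function of $\mathcal{G}(u)$, so I do not anticipate any genuine obstacle. If one preferred to avoid citing Gaussian KL directly, one could alternatively write $\mathcal{J}(u) - \mathcal{J}(u^\dagger)$ explicitly and use the scalar inequality $x - 1 - \log x \geq 0$ applied to the eigenvalues of $\Gamma(u)^{-1}\Gamma(u^\dagger)$, together with non-negativity of the quadratic misfit term, to obtain the same conclusion.
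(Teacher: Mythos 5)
Your proof is correct, and it reaches the conclusion by a genuinely different (more conceptual) route than the paper. The paper computes $\mathcal{J}(u)-\mathcal{J}(u^*)$ directly for a competitor $u^*$ with $\mathcal{G}(u^*)=\mathcal{G}(u^\dagger)$, reduces the trace and log-determinant terms to the eigenvalues of $A(u)=\Gamma(u^\dagger)\Gamma(u)^{-1}$, and applies the scalar inequality $x-\log x\geq 1$ together with strict positivity of the misfit term when $\mathcal{G}(u)\neq\mathcal{G}(u^\dagger)$ --- i.e.\ exactly the ``alternative'' you sketch in your last sentence. Your primary argument instead identifies $\mathcal{J}(u)$ as $2D_{\mathrm{KL}}\bigl(N(\mathcal{G}(u^\dagger),\Gamma(u^\dagger))\,\|\,N(\mathcal{G}(u),\Gamma(u))\bigr)$ plus a $u$-independent constant and invokes Gibbs' inequality with its equality case; the needed non-degeneracy of $\Gamma(u)$ is supplied by \cref{lem:eigbound}, and your observation that $\Gamma(u)$ is a function of $\mathcal{G}(u)$ alone correctly collapses the equality condition to $\mathcal{G}(u)=\mathcal{G}(u^\dagger)$. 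The two arguments are mathematically equivalent at the core (the Gaussian KL formula is itself proved by the eigenvalue computation), but your framing buys some insight: it explains \emph{why} the large-data limiting functional has this form (it is the population negative log-likelihood, minimized exactly when the model law matches the data-generating law), and it makes the remark following the proposition --- that dropping the $\log\det\Gamma(u)$ term destroys consistency --- transparent, since without it the functional is no longer a divergence. The paper's computation has the advantage of being elementary and self-contained. You also correctly flag the shared caveat that the characterization presupposes the set $\{u\in E:\mathcal{G}(u)=\mathcal{G}(u^\dagger)\}$ is nonempty, which the paper leaves implicit.
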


\begin{proof}
Let $u^* \in E$ be such that $\mathcal{G}(u^*) = \mathcal{G}(u^\dagger)$, and let $u \in E$ be any $u$ such that $\mathcal{G}(u) \neq \mathcal{G}(u^\dagger)$. Then we have $\Gamma(u^*) = \Gamma(u^\dagger)$, and so
\begin{align*}
\mathcal{J}(u) - \mathcal{J}(u^*) &= |\mathcal{G}(u^\dagger) - \mathcal{G}(u)|_{\Gamma(u)}^2 + \tr(\Gamma(u^\dagger)\Gamma(u)^{-1}) + \log\det\Gamma(u)\\
&\hspace{1cm} -|\mathcal{G}(u^\dagger) - \mathcal{G}(u^*)|_{\Gamma(u^*)} - \tr(\Gamma(u^\dagger)\Gamma(u^*)^{-1}) - \log\det\Gamma(u^*)\\
&= |\mathcal{G}(u^\dagger)-\mathcal{G}(u)|^2_{\Gamma(u)} + \tr(\Gamma(u^\dagger)\Gamma(u)^{-1}) + \log\det\Gamma(u)\\
&\hspace{1cm}- J - \log\det\Gamma(u^\dagger)\\
&=|\mathcal{G}(u^\dagger)-\mathcal{G}(u)|^2_{\Gamma(u)} + \tr(\Gamma(u^\dagger)\Gamma(u)^{-1}) - \log\det(\Gamma(u^\dagger)\Gamma(u)^{-1}) - J\\
&=|\mathcal{G}(u^\dagger)-\mathcal{G}(u)|^2_{\Gamma(u)} + \tr(A(u)) - \log\det A(u) - J
\end{align*}
where $A(u) = \Gamma(u^\dagger)\Gamma(u)^{-1}$ is a positive definite matrix. Let $(\lambda_i(u))_{i=1}^J$ denote the (positive) eigenvalues of $A(u)$. Expanding the trace and determinant in terms of these yields
\[
\mathcal{J}(u) - \mathcal{J}(u^*) = |\mathcal{G}(u^\dagger)-\mathcal{G}(u)|^2_{\Gamma(u)} +  \sum_{i=1}^J \left(\lambda_i(u) - \log\lambda_i(u)\right) - J.
\]
It can be easily checked (e.g. with one-dimensional calculus) that for any positive $x$ we have $x-\log{x} \geq 1$, and so the right-hand side above is non-negative. Moreover, since $\mathcal{G}(u) \neq \mathcal{G}(u^\dagger)$ and $\Gamma(u)$ is positive definite, the first term must be strictly positive and the result follows.
\end{proof}

\begin{remark}
Note that in the small-noise limit, the logarithmic term disappears in the limiting objective functional, and so for very small noise its inclusion is not necessarily important for consistent estimates. However, if it is neglected in the large sample size limit, the limiting functional will not possess the correct minimizers, and the MAP estimators will not be consistent. This is distinct from the purely additive noise case analysed in \cite{DLSV13}, in which the limiting functionals were the same for both small noise and large sample limits.
\end{remark}

\section{Conclusions and future work}
We have studied the role of multiplicative noise in Bayesian inverse problems. We considered two general models: purely multiplicative noise and a mixed multiplicative/additive model. The latter is shown to be more robust in that weaker assumptions are required on the forward model in order to establish continuity of the posterior in the Hellinger metric with respect to perturbations in the data. In the case of the Gaussian mixed noise model we have shown consistency of MAP estimators in the small noise limit, and provided motivation for the analogous result in the large data limit.

The data model may be generalized further to
\[
y = H(u)\eta,\quad \eta \sim N(z_0,\Gamma),
\]
where $H:X\to\mathcal{L}(Z;Y)$ takes values as general linear operators, rather than a multiplication operators. The additive, multiplicative and mixed noise models are all special cases of this model. In general we may allow the data space $Y$ to be infinite-dimensional, which essentially allows for certain stochastic forward maps, instead of the deterministic ones considered previously. For example, $H(u)$ could be the solution operator $\eta\mapsto p$ for the SPDE
\[
-\nabla\cdot(e^u\nabla p) = \eta
\]
subject to appropriate boundary conditions. The likelihood takes a similar form to the multiplicative case, with
\[
y|u \sim \mathbb{Q}_u := N(H(u)z_0,H(u)\Gamma H(u)^*).
\]
We expect that, subject to appropriate additional assumptions to ensure existence of a dominating measure $\mathbb{Q}_0$ with $\mathbb{Q}_u \ll \mathbb{Q}_0$ for all $u \in X$, similar results should be able to be attained as in this article.

\begin{appendix}
\section{Additional proofs}

\begin{proof}[Proof of \cref{prop:mixed_density}]
Let $f:Y\rightarrow\mathbb{R}$ be bounded measurable. Then
\begin{align*}
\mathbb{E}f(y|u) &= \int_{Y}\int_{Y} f(A(u)w+ z)\rho_a(z)\rho_m(w)\,\dee w\dee z.
\end{align*}
Performing the change of variables
\[
\begin{pmatrix}
x\\
y
\end{pmatrix}
= 
\begin{pmatrix}
I & 0\\
A(u) & I
\end{pmatrix}
\begin{pmatrix}
w\\
z
\end{pmatrix}
\]
we deduce that
\begin{align*}
\mathbb{E}f(y|u) &= \int_{Y}f(y)\left(\int_{Y}\rho_a(y - A(u)x)\rho_m(x)\,\dee x\right)\dee y.
\end{align*}
\end{proof}

\begin{proof}[Proof of \cref{prop:gauss_assump}]
\cref{assump:mixed}(ii) can be easily checked, giving $q_1 = 2$ and $q_2 = 1$, and  \cref{assump:mixed}(iii) follows since Gaussian random variables have all polynomial moments. The proof that \cref{assump:mixed}(iv) holds is more involved. Let $A$ be a diagonal matrix. The probability measure $\nu^{y,A}$ is given by
\begin{align*}
\nu^{y,A}(\dee x) &\propto \rho_a(y-Ax)\rho_m(x)\,\dee x\\
&\propto \exp\left(-\frac{1}{2}\left(|y-Ax|_{\Gamma^a}^2 + |x-\mathbf{1}|^2_{\Gamma^m}\right)\right)\,\dee x.
\end{align*}
This is a Gaussian measure on $Y$. Expanding and completing the square, we see that its mean $m$ and covariance $\Sigma$ are given by
\begin{align*}
\Sigma &= (\Gamma^{m,-1} + A\Gamma^{a,-1}A)^{-1},\\
m &= \Sigma (A\Gamma^{a,-1}y + \Gamma^{m,-1}\mathbf{1}).
\end{align*}
Let $X \sim \nu^{y,A}$. We wish to bound $\mathbb{E}^{\nu^{y,A}}|X|_Y$. Using equivalence of finite dimensional norms, we have
\[
\mathbb{E}^{\nu^{y,A}}|X|_Y \leq C\sum_{j=1}^J\mathbb{E}|X_i|.
\]
Each of the marginal distributions are given by $X_i \sim N(m_i,\Sigma_{ii})$. We use the Sherman-Woodbury formula to deduce
\begin{align*}
\Sigma &= (\Gamma^{m,-1} + A\Gamma^{a,-1}A)^{-1}\\
&= \Gamma^m - \Gamma^mA(\Gamma^a + A\Gamma^m A)^{-1}A\Gamma^m\\
&= \Gamma^m - \Gamma^mA(I + \Gamma^{a,-1}A\Gamma^m A)^{-1}\Gamma^{a,-1}A\Gamma^m.
\end{align*}
Now since $A$ is diagonal, so is its product with any other matrix, and so
\begin{align*}
\Sigma_{ii} &= \Gamma^m_{ii} - \Gamma^{m}_{ii}A_{ii}[(I+A\Gamma^{a,-1}\Gamma^m A)^{-1}]_{ii}\Gamma^{a,-1}_{ii}A_{ii}\Gamma^m_{ii}\\
&= \Gamma^m_{ii} - \frac{A_{ii}^2(\Gamma^m_{ii})^2\Gamma_{ii}^{a,-1}}{1 + A_{ii}^2\Gamma^m_{ii}\Gamma_{ii}^{a,-1}}\\
&= \frac{\Gamma^m_{ii}}{1 + A_{ii}^2\Gamma_{m,ii}\Gamma_{ii}^{a,-1}} \leq \Gamma^{m}_{ii}.
\end{align*}
The mean is then given by $m_i = \Sigma_{ii}A_{ii}\Gamma^{a,-1}_{ii}y_i + \Sigma_{ii}\Gamma^{m,-1}_{ii}$.
Now standard results for one-dimensional Gaussian random variables tell us that the non-central first absolute moment $\mathbb{E}|X_i|$ is given by
\[
\mathbb{E}|X_i| = \sqrt{\frac{2\Sigma_{ii}}{\pi}} {}_{1}F_1\left(-\frac{1}{2},\frac{1}{2},-\frac{1}{2}\frac{m_i^2}{\Sigma_{ii}}\right),
\]
where ${}_1F_1$ is the Kummer confluent hypergeometric function, defined by
\[
{}_1F_1(a,b,x) = \sum_{k=0}^\infty \frac{(a)_k}{(b)_k}\frac{x^k}{k!}.
\]
By comparing power series, it can be seen that we have for $x \geq 0$
\begin{align*}
{}_1F_1(-1/2,1/2,-x) = e^{-x} + \mathrm{erf}(\sqrt{x})\sqrt{\pi x} \leq 1 + \sqrt{\pi x} \leq C(1 + x).
\end{align*}
We may therefore bound the moment
\begin{align*}
\mathbb{E}|X_i| &\leq C\sqrt{\frac{2\Sigma_{ii}}{\pi}}\left(1 + \frac{1}{2}\frac{m_i^2}{\Sigma_{ii}}\right)\\
&= C\sqrt{\frac{2\Sigma_{ii}}{\pi}}\left(1 + \frac{1}{2}\Sigma_{ii}(A_{ii}\Gamma^{a,-1}_{ii}y_i + \Gamma^{m,-1}_{ii})^2\right)\\
&\leq C\sqrt{\frac{2\Gamma^{m}_{ii}}{\pi}}\left(1+\frac{1}{2}\Gamma^{m}_{ii}(A_{ii}\Gamma^{a,-1}_{ii}y_i + \Gamma^{m,-1}_{ii})^2\right)\\
&\leq C(1+ y_i^2A_{ii}^2)\\
&\leq C(1+y_i^4 + A_{ii}^4).
\end{align*}
It follows from the equivalence of finite-dimensional norms that
\[
\mathbb{E}^{\nu^{y,A}}|X|_Y \leq C(1 + |y|_Y^4 + \|A\|_{\mathrm{op}}^4).
\]
\end{proof}

\end{appendix}

\section*{Acknowledgements}
This work was supported in part by the U.S. Department of Energy Office of Science, Advanced Scientific Computing Research (ASCR), Scientific Discovery through Advanced Computing (SciDAC) program. The author would like to thank Georg Stadler and Andrew Stuart for helpful discussions.

\bibliographystyle{siamplain}
\bibliography{references}   

\begin{thebibliography}{10}

\bibitem{Achim2006}
{\sc A.~Achim, E.~E. Kuruoglu, and J.~Zerubia}, {\em {SAR Image Filtering Based
  on the Heavy-Tailed Rayleigh Model}}, IEEE Transactions on Image Processing,
  15 (2006), pp.~2686 -- 2693.

\bibitem{agapiou2018sparsity}
{\sc S.~Agapiou, M.~Burger, M.~Dashti, and T.~Helin}, {\em Sparsity-promoting
  and edge-preserving maximum a posteriori estimators in non-parametric
  {B}ayesian inverse problems}, Inverse Problems, 34 (2018), p.~045002.

\bibitem{Aubert2008}
{\sc G.~Aubert and J.-F. Aujol}, {\em A variational approach to removing
  multiplicative noise}, SIAM Journal on Applied Mathematics, 68 (2008),
  pp.~925--946, \url{https://doi.org/10.1137/060671814}.

\bibitem{chen2012multiplicative}
{\sc B.~Chen, J.-L. Cai, W.-S. Chen, and Y.~Li}, {\em A multiplicative noise
  removal approach based on partial differential equation model}, Mathematical
  Problems in Engineering, 2012 (2012).

\bibitem{DLSV13}
{\sc M.~Dashti, K.~J.~H. Law, A.~M. Stuart, and J.~Voss}, {\em {MAP estimators
  and their consistency in Bayesian nonparametric inverse problems}}, Inverse
  Problems, 29 (2013), p.~095017.

\bibitem{DS17}
{\sc M.~Dashti and A.~M. Stuart}, {\em The {B}ayesian approach to inverse
  problems}, Handbook of Uncertainty Quantification,  (2017), pp.~1--118.

\bibitem{dong2013convex}
{\sc Y.~Dong and T.~Zeng}, {\em A convex variational model for restoring
  blurred images with multiplicative noise}, SIAM Journal on Imaging Sciences,
  6 (2013), pp.~1598--1625.

\bibitem{engl1996regularization}
{\sc H.~W. Engl, M.~Hanke, and A.~Neubauer}, {\em Regularization of Inverse
  Problems}, vol.~375, Springer Science \& Business Media, 1996.

\bibitem{goodman1976some}
{\sc J.~W. Goodman}, {\em Some fundamental properties of speckle}, JOSA, 66
  (1976), pp.~1145--1150.

\bibitem{helin2015maximum}
{\sc T.~Helin and M.~Burger}, {\em Maximum a posteriori probability estimates
  in infinite-dimensional {B}ayesian inverse problems}, Inverse Problems, 31
  (2015), p.~085009.

\bibitem{huang2010multiplicative}
{\sc L.-L. Huang, L.~Xiao, and Z.-H. Wei}, {\em Multiplicative noise removal
  via a novel variational model}, EURASIP Journal on image and video
  processing, 2010 (2010), p.~250768.

\bibitem{isaac2015scalable}
{\sc T.~Isaac, N.~Petra, G.~Stadler, and O.~Ghattas}, {\em Scalable and
  efficient algorithms for the propagation of uncertainty from data through
  inference to prediction for large-scale problems, with application to flow of
  the {A}ntarctic ice sheet}, Journal of Computational Physics, 296 (2015),
  pp.~348--368.

\bibitem{kaipio2006statistical}
{\sc J.~Kaipio and E.~Somersalo}, {\em Statistical and Computational Inverse
  Problems}, vol.~160, Springer Science \& Business Media, 2006.

\bibitem{lassas2009discretization}
{\sc M.~Lassas, E.~Saksman, and S.~Siltanen}, {\em Discretization-invariant
  {B}ayesian inversion and {B}esov space priors}, Inverse Problems \& Imaging,
  3 (2009), pp.~87--122.

\bibitem{lassas2004can}
{\sc M.~Lassas and S.~Siltanen}, {\em Can one use total variation prior for
  edge-preserving {B}ayesian inversion?}, Inverse Problems, 20 (2004), p.~1537.

\bibitem{lie2018equivalence}
{\sc H.~C. Lie and T.~Sullivan}, {\em Equivalence of weak and strong modes of
  measures on topological vector spaces}, Inverse Problems, 34 (2018),
  p.~115013.

\bibitem{lu2016multiplicative}
{\sc J.~Lu, L.~Shen, C.~Xu, and Y.~Xu}, {\em Multiplicative noise removal in
  imaging: An exp-model and its fixed-point proximity algorithm}, Applied and
  Computational Harmonic Analysis, 41 (2016), pp.~518--539.

\bibitem{markkanen2019cauchy}
{\sc M.~Markkanen, L.~Roininen, J.~M. Huttunen, and S.~Lasanen}, {\em Cauchy
  difference priors for edge-preserving {B}ayesian inversion}, Journal of
  Inverse and Ill-posed Problems, 27 (2019), pp.~225--240.

\bibitem{rudinlions}
{\sc L.~Rudin, P.-L. Lions, and S.~Osher}, {\em Multiplicative denoising and
  deblurring: Theory and algorithms}, in Geometric Level Set Methods in
  Imaging, Vision, and Graphics, Springer New York, 2003, pp.~103--119.

\bibitem{inverse}
{\sc A.~M. Stuart}, {\em {Inverse problems : a Bayesian perspective}}, Acta
  Numerica, 19 (2010), pp.~451--559.

\bibitem{sullivan2017well}
{\sc T.~Sullivan}, {\em Well-posed {B}ayesian inverse problems and heavy-tailed
  stable quasi-{B}anach space priors}, Inverse Problems \& Imaging, 11 (2017),
  pp.~857--874.

\bibitem{ullah2017new}
{\sc A.~Ullah, W.~Chen, M.~A. Khan, and H.~Sun}, {\em A new variational
  approach for multiplicative noise and blur removal}, PloS one, 12 (2017),
  p.~e0161787.

\end{thebibliography}

\end{document}